\newenvironment{namelist}[1]{%
\begin{list}{}
{

\settowidth{\labelwidth}{#1}
\setlength{\leftmargin}{1.1\labelwidth}
}
}{%
\end{list}}
\newcommand{\ncom}{\newcommand}
\ncom{\ul}{\underline}
\ncom{\beq}{\begin{equation}}
\ncom{\eeq}{\end{equation}}
\ncom{\bea}{\begin{eqnarray*}}
\ncom{\eea}{\end{eqnarray*}}
\ncom{\beqa}{\begin{eqnarray}}
\ncom{\eeqa}{\end{eqnarray}}
\ncom{\nno}{\nonumber}
\ncom{\non}{\nonumber}
\ncom{\ds}{\displaystyle}
\ncom{\half}{\frac{1}{2}}
\ncom{\mbx}{\makebox{.25cm}}
\ncom{\hs}{\mbox{\hspace{.25cm}}}
\ncom{\rar}{\rightarrow}
\ncom{\Rar}{\Rightarrow}
\ncom{\noin}{\noindent}
\ncom{\bc}{\begin{center}}
\ncom{\ec}{\end{center}}
\ncom{\sz}{\scriptsize}
\ncom{\rf}{\ref}
\ncom{\s}{\sqrt{2}}
\ncom{\sgm}{\sigma}
\ncom{\Sgm}{\Sigma}
\ncom{\psgm}{\sigma^{\prime}}
\ncom{\dt}{\delta}
\ncom{\Dt}{\Delta}
\ncom{\lmd}{\lambda}
\ncom{\Lmd}{\Lambda}
\ncom{\Th}{\Theta}
\ncom{\e}{\eta}
\ncom{\eps}{\epsilon}
\ncom{\pcc}{\stackrel{P}{>}}
\ncom{\lp}{\stackrel{L_{p}}{>}}
\ncom{\dist}{{\rm\,dist}}
\ncom{\sspan}{{\rm\,span}}
\ncom{\re}{{\rm Re\,}}
\ncom{\im}{{\rm Im\,}}
\ncom{\sgn}{{\rm sgn\,}}
\ncom{\ba}{\begin{array}}
\ncom{\ea}{\end{array}}
\ncom{\hone}{\mbox{\hspace{1em}}}
\ncom{\htwo}{\mbox{\hspace{2em}}}
\ncom{\hthree}{\mbox{\hspace{3em}}}
\ncom{\hfour}{\mbox{\hspace{4em}}}
\ncom{\vone}{\vskip 2ex}
\ncom{\vtwo}{\vskip 4ex}
\ncom{\vonee}{\vskip 1.5ex}
\ncom{\vthree}{\vskip 6ex}
\ncom{\vfour}{\vspace*{8ex}}
\ncom{\norm}{\|\;\;\|}
\ncom{\integ}[4]{\int_{#1}^{#2}\,{#3}\,d{#4}}
\ncom{\vspan}[1]{{{\rm\,span}\{ #1 \}}}
\ncom{\dm}[1]{ {\displaystyle{#1} } }
\ncom{\ri}[1]{{#1} \index{#1}}
\newtheorem{remark}{\bf Remark}[section]
\newtheorem{proposition}{Proposition}[section]
\newtheoremstyle
    {remarkstyle}
    {}
    {11pt}
    {}
    {}
    {\bfseries}
    {:}
    {     }
    {\thmname{#1} \thmnumber{#2} }
\theoremstyle{remarkstyle}
\begin{document}

\newpage

\begin{center}
{\Large \bf Tempered Fractional Poisson Processes and Fractional Equations with $Z$-Transform}
\end{center}
\vone
\begin{center}
{Neha Gupta}$^{\textrm{a}}$, {Arun Kumar}$^{\textrm{a}}$, {Nikolai Leonenko}$^{\textrm{b}}$

\footnotesize{
		$$\begin{tabular}{l}
		$^{\textrm{a}}$ \emph{Department of Mathematics, Indian Institute of Technology Ropar, Rupnagar, Punjab - 140001, India}\\
		
$^{b}$Cardiff School of Mathematics, Cardiff University, Senghennydd Road,
Cardiff, CF24 4AG, UK

\end{tabular}$$}
\end{center}
\vtwo

\vtwo
\begin{center}
\noindent{\bf Abstract}
\end{center}
In this article, we derive the state probabilities of different type of space- and time-fractional Poisson processes using $z$-transform. We work on tempered versions of time-fractional Poisson process and space-fractional Poisson processes. We also introduce Gegenbauer type fractional differential equations and their solutions using $z$-transform. Our results generalize and complement the result available on fractional Poisson processes in several directions.
\section{Introduction}
In recent years fractional processes are getting increased attention due to their real life applications. For example fractional Brownian motion (FBM) overcome the limitations of Brownian motion in modeling of long-range dependent phenomena occurring in financial time series, Nile river data and fractal analysis etc (see e.g. Beran, 1994). Similarly time-fractional Poisson process is helpful in modeling of counting processes where the inter-arrival times are heavy tailed or arrivals are delayed (see e.g. Meerschaert et al. 2011; Laskin, 2003). In time-fractional Poisson process the waiting times are Mittag-Leffler (ML) distributed see Laskin(2003). Recently, Orshinger and Polito (2009) introduced space-fractional Poisson process by taking a fractional shift operator in place of an integer shift operator in the governing differential-difference equation of standard Poisson process. Moreover, they have shown that space-fractional Poisson process can also be obtained by time-changing the standard Poisson process with a stable subordinator. Further, they argue that time-fractional Poisson process and the space-fractional Poisson process are specific cases of the same generalized complete model and hence might be useful in the study of transport of charge carriers in semiconductors (Uchaikin and Sibatov, 2008) or applications related to fractional quantum mechanics (Laskin, 2009). In this article, we extend the space-fractional and time-fractional Poisson process by considering a tempered time-space-fractional Poisson process. We feel a strong motivation to study these processes since tempering introduces a finite moment condition in space-fractional Poisson process. Further, it gives more flexibility in modeling of natural phenomena discussed in Laskin (2009), due to extra parameters which can be picked based on the situation. Moreover, we suggest to use $z$-transform since the z-transform method is more general then method of probability generating functions, and hence it could be applied for solutions of fractional equations which are not probability distributions, see Section 5.6. The  governing  equations  for  marginal  distributions  of  Poisson and  Skellam  processes  time-changed  by  inverse  subordinators are discussed in Buchak and Sakhno (2018a). For properties of Poisson processes directed by compound Poisson-Gamma subordinators see Buchak and Sakhno (2018b).\\

The rest of the paper is organized as follows. In Section  2, we introduce $z$-transform and inverse $z$-transform also indicate their main characteristics. Caputo-Djrbashian fractional derivative is discussed in Section 3. In Section  4, main properties of Poisson process are discussed briefly. Section  5 is devoted to different kind of fractional Poisson processes. In this section first we revisit the time- and space-fractional Poisson processes with $z$-transform approach. Our main results are given in  Sections 5.4, 5.5 and 5.6. The last section concludes.

\section{$z$-transform and Inverse $z$-transform}
The $z$-transform is a linear transformation and can be considered as an operator mapping sequence of scalars into functions of complex variable $z$. For a function $f(k), k\in \mathbb{Z}$, the bilateral $z$-transform is defined by
 \begin{eqnarray*}
F(z) = \mathcal{Z}{f(k)}= \sum_{k= -\infty}^{\infty} f(k) z^{-k}, z\in\mathbb{C}
 \end{eqnarray*}
We assume that there exists an $R$ (radius of convergence) such that series converges for $|z|>R$.
The  inverse $z$-transform is defined by the  complex integral
\begin{align*}
 \mathcal{Z}^{-1}\{F(z)\}= f(k) =\frac{1}{2\pi i}\oint\limits_{C}F(z)z^{k-1}\,\mathrm{d}z,
\end{align*}
where $C$ is simple closed contour enclosing the origin and lying outside the circle $|z|=R$. The existence of the inverse imposes restrictions on $f(k)$ for the uniqueness.
Alternatively, in case where $f(k)$ is defined only for $k\in \mathbb{N}$, the (unilateral) $z$-transform is defined as.
\begin{eqnarray*}
F(z) = \mathcal{Z}{f(k)}= \sum_{k= 0}^{\infty} f(k) z^{-k}, z\in\mathbb{C}
 \end{eqnarray*}
\begin{align*}
F(z)=f(0)+f(1)z^{-1}+f(2)z^{-2}+\ldots+f(k)z^{-k}+\ldots,
\end{align*}
where the coefficient of $z^{-k}$ in this expansion is the inverse given by
\begin{align*}
f(k)=\mathcal{Z}^{-1}(F(z)).
\end{align*}
If $f(k)$, $k\in\mathbb{N}$, is probability distribution, that is
\begin{align*}
f(k)\geq 0,  \sum_{k= 0}^{\infty} f(k)=1,
\end{align*}
Then the probability generating function (PGF) is defined as
\begin{align*}
G(s)=\sum_{k= 0}^{\infty}s^{k}f(k), |s|\leq1,
\end{align*}
and relates to unilateral  $z$-transform as follows $G(Z^{-1})=F(z)$\\
The following operational properties of $z$-transform are used further for the solution of initial value problem involving difference equations
\begin{align*}
 \mathcal{Z}{f(k)}=F(z),   
\end{align*}
\begin{equation}\label{property of ztrans}  
 \mathcal{Z}(f(k-m))=z^{-m}[F(z)+\sum_{r=-m}^{-1}{f(r)z^{-r}}],\; m\geq 0,
\end{equation}
\begin{equation}
 \mathcal{Z}(f(k+m))=z^{m}[F(z)-\sum_{r=0}^{m-1}{f(r)z^{-r}}].
\end{equation}

\section{Caputo-Djrbashian Fractional Derivative}
The Caputo-Djrbashain (CD) fractional derivative of a function $u(t),t\geq0$ of order $\beta\in(0,1],$ is defined as

\begin{equation}\label{Caputo_Djrbashian_Derivative}
\frac{d^{\beta}u}{dt^{\beta}}=D^{\beta}_{t}u(t)= \frac{1}{\Gamma{(1-\beta)}}\int_{0}^{t}\frac{du(\tau)}{d{\tau}}\frac{d{\tau}}{(t-\tau)^{\beta}},\; \beta\in(0,1].
\end{equation}

Note that the classes of functions for which the CD derivative is well defined is discussed in [Meerschaert and Sikorski (2012), Sections 2.2, 2.3].\\
The Laplace transform (LT) of CD fractional derivative is given by
$$
\int_{0}^{\infty}e^{-st}D_{t}^{\beta}u(t)dt=s^{\beta}F(s)-s^{\beta-1}u(0^{+}),\; 0<\beta<1,
$$
Where F(s) is the LT of the function $u(t),t\geq0$, see [Meerschaert and Sikorski (2012), p.39]
$$
F(s)=\int_{0}^{\infty}e^{-st}u(t)dt, Re(s)>0.
$$

\noindent  The Laplace transform (LT) of Caputo fractional derivative is given by
\begin{equation}\label{LT-Caputo}
\mathcal{L}\left(\frac{d^{\beta}}{dx^{\beta}} f(x)\right) = s^{\beta} F(s) - \sum_{k=0}^{n-1}s^{\beta-k-1} f^{(k)}(0),
\end{equation}
where $\mathcal{L}(f(x)) = F(s)$.

\section{Poisson Process}
The homogeneous Poisson process $N(t), t\geq0$, with parameter $\lambda>0$ is defined as,
\begin{align*}
N(t)= \max\{n: T_1+T_2+ \ldots +T_n\geq0\},\; t\geq 0,
\end{align*}
 where the inter-arrival times $T_{1},T_2,\ldots$ are non-negative iid exponential random variable with mean $1/\lambda$. The probability mass function (PMF) $P(k,t)= \mathbb{P}(N(t)=k)$ is given by
\begin{equation}
P(k,t) = \mathbb{P}(N(t)=k)= \frac{e^{-\lambda t}(\lambda t)^k}{k!},\;k=0,1,\ldots
\end{equation}
The PMF of the Poisson process govern the following differential-difference equation
\begin{equation}\label{DDE-PP}
\frac{d}{dt}P(k,t)= -\lambda (P(k,t) - P(k-1,t)) = -\lambda \nabla P(k,t)
\end{equation}
\begin{equation}\label{intial condition}
P(k,0) =\delta_{k,0}=
          \begin{cases}
                  0, & k\neq0,\\
                  1, & k=0.
           \end{cases}
\end{equation}
Where  by the definition $P(-1,t)=0$, and $\nabla = (1-B)$ with $B$ as backward shift operator, that is  
$B\{P(k,t)\}=P(k-1,t)$. 
\section{Fractional Poisson Process}
In this section, we revisit space- and time-fractional Poisson processes using the $z$-transform approach. Note that $z$-transform is more general than the probability generating function approach and can be used to solve the difference-differential equations where the solution may not be a probability distribution. Also, we introduce and study tempered space-time-fractional Poisson processes. Further, to show the importance of $z$-transform, we consider Gegenbauer type fractional difference equations.
\subsection{The Time-Fractional Poisson Process}
The time-fractional Poisson process(TFPP) was first introduced by (see Mainardi et al , 2004) as renewal process
\begin{align}
N_{\beta}(t)= \max\{n: T^{\beta}_{1}+ \ldots +T^{\beta}_{n}\leq0\},\; t\geq0, \beta\in(0,1],
\end{align}
where the inter arrival times $T^{\beta}_{1},T^{\beta}_{2}\ldots$ are iid non-negative random variables with Mittag-Leffler distribution function.
\begin{align}
 \mathbb{P}(T^{\beta}_{k}\leq x)=1-E_{\beta}({-\lambda} x^{\beta}),x\geq0,\; \lambda>0.
\end{align}
Probability distribution function (PDF)
\begin{align}
f(x)={\lambda}x^{\beta-1}E_{\beta,\beta}({-\lambda}x^{\beta}),x\geq0,>0,\lambda>0, \beta\in(0,1],	
\end{align}
where
\begin{align}
E_{a,b}(z)=\sum_{k= 0}^{\infty}\frac{z^{k}}{\Gamma(ak+b)},z\in \mathbb{C},a,b\geq0,
\end{align}
is two parametric Mittag-Leffler distribution function, and $E_{a}(z)=E_{a,1}(z),z\in \mathbb{C}$ is the classical Mittag-Leffler function (see Gorenflo et al. 2014).\\ 
Let $S_{\beta}(t),t\geq0$ be a stable subordinator with Laplace transform\\
\begin{align}
\mathbb{E}e^{-zS_{\beta}(t)}=e^{-tz^\beta},\; z>0,\; t\geq0,\; \beta\in(0,1),	
\end{align}
We define it's right-inverse process called inverse stable subordinator, as
\begin{align}
Y_{\beta}(t)= \inf\{w>0: S_{\beta}(w)> t\}, \;t\geq 0.
\end{align}
The process $Y_{\beta}(t), \;t\geq0$ is non-Markovian with non-stationary increment (see Bingham 1971).\\
Alternatively, Meerschaert et al. (2011) find the following subordinator representation of TFPP: 
\begin{align}
N_{\beta}(t)=N(Y_{\beta}(t)),\; t\geq0,\; \beta\in(0,1),
\end{align}
where $N(t),t\geq0$ is the homogenous Poisson process with parameter $\lambda>0$ and $Y_{\beta}(t),t\geq0$, is independent of $N(t),t\geq0$. 
Beghin and Orshinger (2009) have shown that the PMF
\begin{align}
P_{\beta}(k,t)=\mathbb{P}\{N_{\beta}(t)=k\}=\frac{(\lambda t^{\beta})^k}{k!}\sum_{r=0}^{\infty}\frac{(k+r)!}{r!}\frac{(-\lambda t^{\beta})^r}{\Gamma(\beta(k+r)+1)},\; k=0,1,2 \ldots,\;  \beta\in(0,1].
\end{align}
And it is solution of the following functional differential-difference equation with CD fractional derivative in time:
\begin{align}\label{derivative in time}
\frac{d^{\beta}}{dt^{\beta}}P_{\beta}(k,t) = -\lambda^{\beta} (P_{\beta}(k,t) - P_{\beta}(k-1,t)) = -\lambda^{\beta} \nabla P_{\beta}(k,t),\\
  P_{\beta}(k,t)=0,\;\; \mathrm{where}\;\; k<0, \\
 P_{\beta}(k,0)=\delta_{k,0}, \; k=0,1,2 \ldots ,\;  \beta\in(0,1].
\end{align}
For the particular case $\beta=1$, the process reduce to the standard Poisson process. 

\subsection{The Space-Fractional Poisson Process}
Let $S_{\alpha}(t),t\geq0, \alpha\in(0,1)$, be a stable subordinator and $N(t),t\geq0$, is homogenous Poisson process with parameter $\lambda>0$, independent of $S_{\alpha}(t),t\geq0$.\\
The space-fractional Poisson process (SFPP) $N^{\alpha}(t),t\geq0,0<\alpha<1$ was introduced by Orshinger and Polito (2012) as follows
\begin{equation}
 N^{\alpha}(t) =
          \begin{cases}
                  N(S_{\alpha}(t)),t\geq0 & 0<\alpha<1,\\
                  N(t),t\geq0, & \alpha=1.
           \end{cases}
\end{equation}
The  density
$f(x,1)$ of $S_{\alpha}(1)$ is infinitely differentiable on $(0,\infty)$,
with the asymptotics as follows (see Uchaikin and Zolotarev, 1999):
\begin{equation}\label{D-asymptotic-0}
f(x,1)\sim
\frac{(\frac{\alpha}{x})^{\frac{2-\alpha}{2(1-\alpha)}}}{\sqrt{2\pi\alpha(1-\alpha)}}
e^{-(1-\alpha)(\frac{x}{\alpha})^{-\frac{\alpha}{1-\alpha}}}, \ \
\mathrm{as}\ \ x\to 0;
\end{equation}
\begin{equation}\label{D-asymptotic-large}
f(x,1)\sim \frac{\alpha}{\Gamma(1-\alpha)x^{1+\alpha}}, \ \
\mathrm{as} \ \ x\to \infty.
\end{equation}
Exact form of the density $f(x,1)$ in term of infinite series or integral are discussed in [Aletti et al. 2018;  Kumar and Vellaisamy, 2015] and has the following infinite-series form
\begin{align} \label{stable-series-form}
f(x,t) = 
      \displaystyle\frac{1}{\pi} \sum_{k=1}^{\infty}(-1)^{k+1}\frac{\Gamma(k\alpha +1)}{k!}\frac{1}{x^{k\alpha+1}}\sin\left(\pi\alpha k\right),\; x>0.
\end{align}
Note that from \eqref{D-asymptotic-0} and
\eqref{D-asymptotic-large}, we have
\begin{equation}\label{asymp_stable_density}
\lim_{x\rightarrow 0} f(x,1)= f(0,1)=0~~and~~\lim_{x\rightarrow
\infty} f(x,1)= f(\infty,1)=0.
\end{equation}
The probability generating function (PGF) of this process is of the form.\\
\begin{align}\label{PGF of space}
G^{\alpha}(s,t)=\mathbb{E}s^{N^{\alpha}(t)}=e^{{-\lambda}^{\alpha}(1-s)^{\alpha}t},\; |s|\leq1, \; \alpha\in(0,1).
\end{align}
We introduce the fractional difference operator (see Beran, 1994, p-60)
\begin{align}\label{difference opr}
\nabla ^\alpha=(1-B)^\alpha=\sum_{k=0}^{\infty}{\alpha \choose k}(-1)^{k}B^{k}, \;\; \alpha\in(0,1),
\end{align}
where
$$
{\alpha \choose k}=\frac{(\alpha)(\alpha-1)\ldots(\alpha-k+1)}{k!}=\frac{(-1)^{k}(-\alpha)_{k}}{k!},
$$
 and  Pochhammer symbol
\begin{align}\label{Pochhammer}
(\lambda)_{k}=
          \begin{cases}
                  \lambda(\lambda+1)\ldots(\lambda+k-1), & k=1,2, \ldots \\
                  1, & k=0.
           \end{cases}
\end{align}
Let 
\begin{align} 
P^{\alpha}(k,t) =\mathbb{P}\{N^{\alpha}(t)=k\},   k=0,1, \ldots.
\end{align}
The PMF $P^{\alpha}(k,t)$ satisfies the following fractional differential-difference equations (see e.g. Orsingher and Polito, 2012).
\begin{align}\label{SFPP}
 \frac{d}{dt}P^{\alpha}(k,t) &= -\lambda^\alpha (1-B)^\alpha P^{\alpha}(k,t),\;\;  \alpha\in(0,1],\; k=1,2,\ldots \\
\frac{d}{dt}P^{\alpha}(0,t)& = -\lambda^\alpha P^{\alpha}(0,t),
\end{align}
with initial conditions            
\begin{equation}\label{initial-conditions}  
  P^{\alpha}(k,0) = \delta_{k,0}.
\end{equation}
\noindent Using the $z$-transform in both side, it follows
\begin{align*}
 \frac{d}{dt}\{\mathcal{Z}{P^{\alpha}(k,t)}\} = -\lambda^\alpha[\mathcal{Z}\{{(1-B)^\alpha P^{\alpha}(k,t)\}}].
\end{align*}
Further, using \eqref{difference opr}, we have
\begin{align*}
 \frac{d}{dt}\{\mathcal{Z}{P^{\alpha}(k,t)}\}=-\lambda^\alpha\mathcal{Z}P^{\alpha}(k,t)\left[1-\frac{\alpha}{z}+\frac{\alpha(\alpha-1)}{(2!)(z^2)}-\ldots \right].
\end{align*}
Further,
\begin{align}\label{intermediate-eq}
 \frac{d}{dt}\{\mathcal{Z}{P^{\alpha}(k,t)}\}=\left[-\lambda\left(1-\frac{1}{z}\right)\right]^{\alpha} \mathcal{ Z}{P^{\alpha}(k,t)}.
\end{align}
Solving \eqref{intermediate-eq} for $\mathcal{Z}{P^{\alpha}(k,t)}$ and using initial conditions in \eqref{initial-conditions}, leads to
\begin{align*}
 \mathcal{Z}{P^{\alpha}(k,t)}&=e^{-\lambda^{\alpha}(1-\frac{1}{z})^\alpha t}
=\sum_{r=0}^{\infty}(-{\lambda}^{\alpha})^{r} t^{r}(1-z^{-1})^{\alpha r}=\sum_{r=0}^{\infty}(-{\lambda}^{\alpha})^{r} t^{r}\sum_{k=0}^{\infty}(-1)^{k}{\alpha r \choose k} {z}^{-k}\\
&=\sum_{k=0}^{\infty}z^{-k}\left[\frac{(-1)^k}{k!}\sum_{r=0}^{\infty}\frac{(-{\lambda}^{\alpha})^{r} t^r}{r!}\frac{\Gamma(r\alpha+1)}{\Gamma(r\alpha-k+1)}\right].
\end{align*}
To find $P^{\alpha}(k,t)$, invert the $z$-transform that is equivalent to find the coefficient of $z^{-k}$, which leads to
\begin{align}\label{space-fractional-PMF}
P^{\alpha}(k,t) =\frac{(-1)^k}{k!}\sum_{r=0}^{\infty}\frac{(-\lambda^\alpha)^{r} t^r}{r!}\frac{\Gamma(r\alpha+1)}{\Gamma(r\alpha-k+1)}.
\end{align}
Moreover, one can write 
\begin{equation}
\left(1-\frac{1}{z}\right)^{2\alpha} = \left(1 - \frac{1}{z}\left(2 - \frac{1}{z}\right)\right)^{\alpha}.
\end{equation}
By comparing the coefficients of $z^{-2p}$ in both side, we get the following identity.
For any $\alpha>0$ and $p\in\mathbb{N}$, the following identify holds
\begin{align}\label{Binomial-Identity}
{2\alpha \choose 2p} &= {\alpha \choose p}\cdot {p\choose 0} + 2^2{\alpha \choose p+1}\cdot {p+1\choose 2} + \ldots \nonumber\\
&+ 2^{2p-2}{\alpha \choose 2p-1}\cdot {2p-1\choose 2p-2} + 2^{2p}{\alpha \choose 2p}\cdot {2p\choose 2p}, 
\end{align}
where ${\alpha \choose p} = \frac{\Gamma(\alpha+1)}{\Gamma(p+1)\Gamma(\alpha-p+1)}.$ The identity \eqref{Binomial-Identity} is used in subsequent section.
\begin{remark}
The composition of $n$ stable subordinators is also a stable subordinator.  Let $S_1, S_2,\ldots, S_n$ be $n$ independent stable subordinators with parameters $\alpha_i,\;i=1,2\ldots,n$.
Then the iterated composition defined by
$S^{(n)}(t)=S_1oS_2o\cdots oS_n(t)$ is also a stable subordinator with parameter $\alpha_1\alpha_2\cdots\alpha_n.$ It is easy to show that the PMF
$\tilde{P}(k,t)= \mathbb{P}(N(S^{(n)}(t))=k)$, satisfies the following
equation
\begin{equation}
 \frac{d^{2^n}}{dt^{2^n}}\tilde{P}(k, t) = \lambda
[\tilde{P}(k, t) - \tilde{P}(k-1, t)]. 
\end{equation}
\end{remark}

\subsection{The Time-Space-Fractional Poisson Process}
Note that Orshinger and Polito (2012) introduced the following time-space-fractional differential equations
\begin{align}\label{STFPP}
\frac{d^\beta}{dt^\beta}P^{\alpha}_{\beta}(k,t) &= -\lambda^\alpha(1- B)^\alpha P^{\alpha}_{\beta}(k,t),  \;\; \alpha\in(0,1] \;\;  \beta\in(0,1),\; k=1,2,\ldots \\
\frac{d^{\beta}}{dt^{\beta}}P^\alpha_{\beta}(0,t)& =-\lambda^\alpha P^\alpha_{\beta}(0,t),
\end{align}
with initial conditions
\begin{equation}\label{initial-conditions2}  
  P^{\alpha}_{\beta}(k,0) =\delta_{k,0}=
          \begin{cases}
                  0, & k>0,\\
                  1, & k=0,
           \end{cases}
\end{equation}
where $\frac{d^\beta}{dt^\beta}$ is the Caputo-Djrbashian derivative defined in \eqref{Caputo_Djrbashian_Derivative}.
They have shown that 
\begin{align}\label{TSPP} 
P^{\alpha}_{\beta}(k,t)=\frac{(-1)^k}{k!}\sum_{r=0}^{\infty}\frac{(-\lambda^\alpha)^{r}t^{r\beta}}{\Gamma(1 + r\beta)}\frac{\Gamma(r\alpha+1)}{\Gamma(r\alpha-k+1)},\;\; k=0,1,\ldots,
\end{align}
and its probability generating function (PGF)
\begin{align}
G_{\beta}^{\alpha}(u,t)=\sum_{k=0}^{\infty}u^{k}P_{\beta}^{\alpha}(k,t)=E_{\beta}\left(-\lambda^{\alpha}t^{\beta}(1-u)^{\alpha}\right),\; |u|\leq 1.
\end{align}
Using $z$-transform, we can present as alternative proof of the fact that \eqref{TSPP} govern the equation
To solve \eqref{TSPP}, take the $z$-transform in both hand side, leads to 
\begin{align*}
 \frac{d^\beta}{dt^\beta} \{ \mathcal{Z} P_{\beta}^{\alpha}(k,t)\}= -\lambda^\alpha(1-z^{-1})^\alpha \{ \mathcal{Z} P_{\beta}^{\alpha}(k,t) \}.
\end{align*}
Further, using the Laplace transform with respect to the time variable $t$ and $\mathcal{Z}\{P^\alpha_\beta(k,0) \} =1$, it follows
\begin{align*}
 s^\beta \mathcal{L}[ \mathcal{Z} \{ P_{\beta}^{\alpha}(k,t) \} ]- s^{\beta-1}= -\lambda^\alpha(1-z^{-1})^\alpha\mathcal{L}[\mathcal{Z}\{ P_{\beta}^{\alpha}(k,t)\}].
\end{align*}
By some manipulation, it follows
\begin{align*}
\mathcal{L}[\mathcal{Z}\{P_{\alpha}(k,t)\}] = \frac{s^{\beta-1}}{s^\beta+ \lambda^\alpha(1-z^{-1})^\alpha}.
\end{align*}
Using the LT of Mittag-Leffler function $\mathcal{L}(E_{\beta,1}(- u t^{\beta})) = \frac{s^{\beta-1}}{u +s^{\beta}}$ (see e.g. Meerscharet and Sikorski (2012), p. 36), it follows
\begin{align}\label{z-transform-TSFPP}
 \mathcal{Z}\{P^{\alpha}_{\beta}(k,t)\}&= \mathcal{L}^{-1}\left\{\frac{s^{\beta-1}}{s^\beta+\lambda^\alpha(1-z^{-1})^\alpha}\right\}
  = E_{\beta,1}((- \lambda^\alpha(1-z^{-1})^{\alpha} t^{\beta})\\
 & = \sum_{k=0}^{\infty}\frac{(-1)^k\lambda^{k\alpha}t^{k\beta}(1-z^{-1})^{k\alpha}}{\Gamma(1+k\beta)}\nonumber.
 \end{align}
 Inverting the $z$-transform gives
 \begin{align*}
 P^{\alpha}_{\beta}(k,t)&=\frac{(-1)^k}{k!}\sum_{r=0}^{\infty}\frac{(-\lambda^\alpha)^{r}t^{r\beta}}{\Gamma(1+ r\beta)}(r\alpha)(r\alpha-1) \ldots (r\alpha-k+1)\\
&=\frac{(-1)^k}{k!}\sum_{r=0}^{\infty}\frac{(-\lambda^\alpha)^{r}t^{r\beta}}{\Gamma(1 + r\beta)}\frac{\Gamma(r\alpha+1)}{\Gamma(r\alpha-k+1)},\;\; k=0,1,2, \ldots
\end{align*}
Alternatively, one can define the time-space-fractional Poisson process (TSFPP) as follows
\begin{equation}\label{TSFPP-subordination}
N^\alpha_{\beta}(t)=N(S_{\alpha}(Y_{\beta}(t)) = N^{\alpha}(Y_{\beta}(t)),\; t\geq0,
\end{equation}
where TSFPP is obtained by subordinating the standard Poisson process $\{N(t)\}_{t\geq0}$ by an independent $\alpha$-stable subordinator $\{S_{\alpha}(t)\}_{t\geq0}$ and then by the inverse $\beta$-stable subordinator $\{Y_{\beta}(t)\}_{t\geq0}$. 
\begin{proposition}\label{prop5.1}
The state probabilities of time-space-fractional Poisson process defined in \eqref{TSFPP-subordination} satisfies the equation \ref{STFPP}. 
\end{proposition}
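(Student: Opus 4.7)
The plan is to exploit the subordination structure in \eqref{TSFPP-subordination} together with the $z$-transform approach already developed in Section~5.2 for the space-fractional Poisson process. Since $N^\alpha_\beta(t) = N^\alpha(Y_\beta(t))$ with $Y_\beta$ independent of $N^\alpha$, I would first condition on $Y_\beta(t)=u$ to write
\[
P^\alpha_\beta(k,t) \;=\; \int_0^\infty P^\alpha(k,u)\, h_\beta(u,t)\,du,
\]
where $h_\beta(\cdot,t)$ denotes the density of $Y_\beta(t)$.

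Next, I would apply the $z$-transform in the state variable $k$ and interchange it with the integral in $u$. By \eqref{PGF of space} we have $\mathcal{Z}\{P^\alpha(k,u)\} = e^{-\lambda^\alpha(1-z^{-1})^\alpha u}$, so
\[
\mathcal{Z}\{P^\alpha_\beta(k,t)\} \;=\; \mathbb{E}\bigl[e^{-\lambda^\alpha(1-z^{-1})^\alpha Y_\beta(t)}\bigr] \;=\; E_\beta\bigl(-\lambda^\alpha(1-z^{-1})^\alpha t^\beta\bigr),
\]
where the last equality uses the classical Laplace transform $\mathbb{E}[e^{-sY_\beta(t)}] = E_\beta(-st^\beta)$. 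This coincides with \eqref{z-transform-TSFPP}, which was previously used to identify the PMF.

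Then I would apply the Caputo-Djrbashian derivative in $t$, using the eigenfunction property $D^\beta_t E_\beta(-ct^\beta) = -c\,E_\beta(-ct^\beta)$, to obtain
\[
\frac{d^\beta}{dt^\beta}\mathcal{Z}\{P^\alpha_\beta(k,t)\} \;=\; -\lambda^\alpha(1-z^{-1})^\alpha \mathcal{Z}\{P^\alpha_\beta(k,t)\}.
\]
Inverting the $z$-transform by linearity, and using the identity $\mathcal{Z}\{(1-B)^\alpha f(k)\} = (1-z^{-1})^\alpha \mathcal{Z}\{f(k)\}$ derived from \eqref{difference opr}, yields
\[
\frac{d^\beta}{dt^\beta} P^\alpha_\beta(k,t) \;=\; -\lambda^\alpha(1-B)^\alpha P^\alpha_\beta(k,t),\qquad k\geq 1,
\]
which is \eqref{STFPP}. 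The equation for $k=0$ follows because $B\,P^\alpha_\beta(0,t) = P^\alpha_\beta(-1,t) = 0$, and the initial condition $P^\alpha_\beta(k,0)=\delta_{k,0}$ is immediate from $Y_\beta(0)=0$ and $N^\alpha(0)=0$.

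The main technical obstacle is justifying the two interchanges: the $z$-transform (an infinite sum in $k$) with the integral against $h_\beta(u,t)$, and the CD derivative (another integral operator) with the resulting series. Both are handled by dominated convergence in the region $|z|>1$, using the rapid decay of $P^\alpha(k,u)$ in $k$ and the integrability properties of $h_\beta$; the uniqueness of the $z$-transform inversion then guarantees that the recovered difference-differential equation holds pointwise in $k$.
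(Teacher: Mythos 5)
Your proposal is correct, and its first half is exactly the paper's argument: condition on $Y_\beta(t)$, use independence and the probability generating function of the space-fractional Poisson process to get
\[
\mathcal{Z}\{P^\alpha_\beta(k,t)\}=\mathbb{E}\bigl[e^{-\lambda^\alpha(1-z^{-1})^\alpha Y_\beta(t)}\bigr]=E_{\beta,1}\bigl(-\lambda^\alpha(1-z^{-1})^\alpha t^\beta\bigr),
\]
via $\mathbb{E}[e^{-sY_\beta(t)}]=E_{\beta,1}(-st^\beta)$. Where you diverge is in how you close the argument. The paper stops at this point: it observes that this $z$-transform coincides with \eqref{z-transform-TSFPP}, which was obtained in Section 5.3 by solving \eqref{STFPP} with a Laplace transform in $t$ and a $z$-transform in $k$, and concludes by uniqueness of the $z$-transform that the two representations (and hence the governing equation) agree. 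You instead verify the equation directly: you apply the Caputo--Djrbashian derivative to the Mittag--Leffler function using the eigenfunction property $D_t^\beta E_\beta(-ct^\beta)=-cE_\beta(-ct^\beta)$, and then invert the $z$-transform, treating $k=0$ separately via $P^\alpha_\beta(k,t)=0$ for $k<0$. Both routes are sound. The paper's identification-plus-uniqueness step is shorter but leans on the prior Laplace-transform computation of Section 5.3; your direct verification is self-contained (it does not need \eqref{z-transform-TSFPP} to have been derived from the equation) but puts the analytic burden on the interchanges you flag at the end — pulling $D_t^\beta$ through the series in $k$ (equivalently through the integral against $h_\beta(u,t)$) and identifying coefficients of $z^{-k}$ — which you only sketch via dominated convergence for $|z|>1$; spelling out a domination for the $t$-integral defining the CD derivative would make that step complete.
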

\begin{proof}
Using $z$-transform, it follows
\begin{align}\label{z-transform-TSFPP-alternative}
G(z,t)&= \sum_{k=0}^{\infty}z^{-k}\mathbb{P}(N^\alpha_{\beta}(t) =k) = \sum_{k=0}^{\infty}z^{-k}\mathbb{P}\left(N(S_{\alpha}(Y_{\beta}(t)))=k\right)\nonumber\\
& = \sum_{k=0}^{\infty}z^{-k}\mathbb{P}\left(N(Y_{\beta}(t)) = k\right) = \mathbb{E}\left[\sum_{k=0}^{\infty}z^{-k}\mathbb{P}\left(N(Y_{\beta}(t)) = k|Y_\beta(t)\right)\right]\nonumber\\
& = \mathbb{E}\left[e^{-\lambda^{\alpha}\left(1-z^{-1}\right)^{\alpha}Y_{\beta}(t)}\right] = E_{\beta,1}\left(-\lambda^{\alpha}\left(1-z^{-1}\right)^{\alpha}t^{\beta}\right),
\end{align}
which follows using the result $\mathbb{E}(e^{-sY_{\beta}(t)}) = E_{\beta,1}(-st^{\beta}).$
Note that the two $z$-transforms given in \eqref{z-transform-TSFPP} and \eqref{z-transform-TSFPP-alternative} are same and hence two representations are equivalent by the uniqueness of $z$-transform.
\end{proof}

In next subsections, we generalize the above discussed processes to their tempered counterparts, which can give more flexibility in modeling of the natural phenomena suggested for the space- and time-fractional Poisson processes due the the extra control parameter.

\subsection{The Tempered Space-Fractional Poisson Process}
One can also define tempered space-fractional Poisson process by subordinating homogeneous Poisson process with the tempered stable subordinator. Note that tempered stable subordinators are obtained by exponential tempering in the distribution of stable subordinator, see Rosinski (2007) for more details on tempering stable processes. Let
$f(x,t)$, $0<\alpha<1$ denotes the density of a stable subordinator $S_{\alpha}(t)$ with
LT
\begin{equation} 
 \int_{0}^{\infty}e^{-sx}f(x,t)
dx= e^{-ts^{\alpha}}. 
\end{equation}
A tempered stable subordinator
$S_{\alpha,\mu}(t)$ has a density

\begin{equation}\label{ts-density}
f_{\mu}(x,t)= e^{-\mu x+\mu^{\alpha}t}
f(x,t),~~ \mu>0. 
\end{equation}
Using \eqref{ts-density} and \eqref{asymp_stable_density}, it follows
\begin{equation}\label{asymp_temstable_density}
\lim_{x\rightarrow 0} f_{\mu}(x,t)= f_{\mu}(0,t)=0~~\mathrm{and}~~\lim_{x\rightarrow
\infty} f_{\mu}(x,t)= f_{\mu}(\infty,t)=0.
\end{equation}
The sample paths of $S_{\alpha,\mu}(t)$ are strictly increasing similar to the stable subordinator. Further the LT
\begin{equation}\label{tempered-LT}
\tilde{f}_{\mu}(s,t)=\int_{0}^{\infty}e^{-s x}f_{\mu}(x,t)dx =
e^{-t((s+\mu)^{\alpha}-\mu^{\alpha})}.
\end{equation}
For $S_{\alpha, \mu}(t)$, we have $\mathbb{E}(S_{\alpha, \mu}(t)) = \alpha \mu^{\alpha-1}t$ and $\mbox{Var}(S_{\alpha, \mu}(t)) = \alpha(1-\alpha) \mu^{\alpha-2}t$.
The tempered space-fractional Poisson process is defined by 
\begin{align}\label{tempered-SFPP-Subodination}
N^{\alpha,\mu}(t) := N(S_{\alpha,\mu}(t)),\;\alpha\in(0,1),\;\mu\geq 0,
\end{align}
where homogeneous Poisson process $N(t)$ is independent of $S_{\alpha,\mu}(t)$ a tempered stable subordinator. The tempered space-fractional Poisson process is a L\'evy process with finite integer order moments due to the finite moments of the tempered stable subordinators. However the integer order moments of space-fractional Poisson process are not finite.
The tempered space-fractional Poisson process with marginal PMF $P^{\alpha, \mu}(k,t)$ can also be defined by taking a tempered fractional shift operator instead of an ordinary fractional shift operator in \eqref{DDE-PP} such that

\begin{align}\label{tempered-SFPP}
\frac{d}{dt}P^{\alpha, \mu}(k,t) = - ((\mu + \lambda(1-B))^{\alpha} - \mu^{\alpha}) P^{\alpha, \mu}(k,t),  \;\; \alpha\in(0,1], \;\;  \mu\geq 0,
\end{align}
which reduces to the SFPP by taking $\mu =0.$ We have following proposition for the state probabilities of tempered space-fractional Poisson process. 
\begin{proposition}
The state probabilities for tempered space-fractional Poisson process are given by
\begin{equation}
P^{\alpha, \mu}(k,t) = (-1)^k\sum_{m=0}^{\infty}\mu^m\lambda^{-m}\sum_{r=0}^{\infty}\frac{(-t)^r}{r!}{\alpha r \choose m} {\alpha r -m \choose k},\;k=0,1,\ldots,\;\mu\geq 0,\;t\geq 0.
\end{equation}
For $\mu =0$, which reduces to \eqref{space-fractional-PMF}.
\end{proposition}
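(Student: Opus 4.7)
The plan is to follow the $z$-transform technique used in Section~5.2 for the untempered space-fractional Poisson process. Applying the bilateral $z$-transform to both sides of \eqref{tempered-SFPP} and using that the backward shift $B$ has symbol $z^{-1}$, the tempered fractional-difference operator $(\mu+\lambda(1-B))^{\alpha}-\mu^{\alpha}$ is converted into multiplication by its symbol $(\mu+\lambda(1-z^{-1}))^{\alpha}-\mu^{\alpha}$. Writing $\widetilde{P}(z,t)=\mathcal{Z}\{P^{\alpha,\mu}(k,t)\}$, one obtains the first-order linear ODE
$$\frac{d}{dt}\widetilde{P}(z,t)=-\bigl[(\mu+\lambda(1-z^{-1}))^{\alpha}-\mu^{\alpha}\bigr]\widetilde{P}(z,t),\qquad \widetilde{P}(z,0)=1,$$
where the initial condition comes from $P^{\alpha,\mu}(k,0)=\delta_{k,0}$. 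Its unique solution is $\widetilde{P}(z,t)=\exp\{-t[(\mu+\lambda(1-z^{-1}))^{\alpha}-\mu^{\alpha}]\}$. As a cross-check, the subordination representation \eqref{tempered-SFPP-Subodination} combined with the Laplace transform \eqref{tempered-LT} gives the PGF $\mathbb{E}[z^{-N^{\alpha,\mu}(t)}]=\mathbb{E}[\exp\{-\lambda(1-z^{-1})S_{\alpha,\mu}(t)\}]=\exp\{-t[(\lambda(1-z^{-1})+\mu)^{\alpha}-\mu^{\alpha}]\}$, which is the same generating function.

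The remaining step is to extract the coefficient of $z^{-k}$. I would split off the $z$-independent piece as $\exp\{t\mu^{\alpha}\}\exp\{-t(\mu+\lambda(1-z^{-1}))^{\alpha}\}$ and Taylor-expand the second factor to reach $\sum_{r\ge 0}\frac{(-t)^{r}}{r!}(\mu+\lambda(1-z^{-1}))^{\alpha r}$. Each summand is expanded by the generalised binomial theorem,
$$(\mu+\lambda(1-z^{-1}))^{\alpha r}=\sum_{m=0}^{\infty}{\alpha r \choose m}\mu^{m}\lambda^{\alpha r-m}(1-z^{-1})^{\alpha r-m},$$
and a further binomial expansion gives $(1-z^{-1})^{\alpha r-m}=\sum_{k=0}^{\infty}(-1)^{k}{\alpha r-m \choose k}z^{-k}$, where the generalised binomial coefficients are read off through the Pochhammer symbol \eqref{Pochhammer}. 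Substituting and collecting powers of $z^{-1}$ yields a representation of $\widetilde{P}(z,t)$ as $\sum_{k\ge 0}c_{k}(t)z^{-k}$; by the uniqueness of the $z$-transform, $P^{\alpha,\mu}(k,t)=c_{k}(t)$, which is the claimed double-series formula. As a sanity check, setting $\mu=0$ collapses the $m$-sum to the $m=0$ term with ${\alpha r \choose 0}=1$ and recovers exactly \eqref{space-fractional-PMF}.

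The principal technical obstacle is justifying the several term-by-term rearrangements. The generalised binomial expansion of $(\mu+\lambda(1-z^{-1}))^{\alpha r}$ converges only when $|\mu|<|\lambda(1-z^{-1})|$, so one has to restrict $z$ to an annulus $|z|>R$ with $R>1$ chosen large enough (depending on $\mu/\lambda$) to ensure absolute convergence; a parallel consideration applies to the expansion of $(1-z^{-1})^{\alpha r-m}$. Once absolute convergence of the triple sum is established in such an annulus, Fubini allows the summations to be freely interchanged and the coefficient of each $z^{-k}$ identified unambiguously, after which the identity extends to all admissible $z$ by analyticity.
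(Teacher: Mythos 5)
Your proposal follows essentially the same route as the paper: obtain the $z$-transform $G(z,t)=\exp\{-t[(\mu+\lambda(1-z^{-1}))^{\alpha}-\mu^{\alpha}]\}$, expand via the exponential series and two generalized binomial expansions, and read off the coefficient of $z^{-k}$; your subordination cross-check and the convergence/Fubini remarks are harmless additions. The only caveat is that careful bookkeeping in the final collection step actually produces the extra factors $e^{t\mu^{\alpha}}$ and $\lambda^{\alpha r}$ (i.e.\ $(-\lambda^{\alpha}t)^{r}$ in place of $(-t)^{r}$), which the stated formula --- and the paper's own last line --- omit, so your claim that the expansion yields the displayed series verbatim inherits that same slip.
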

\begin{proof}
Suppose $G(z,t)$ is the $z$-transform of $P^{\alpha, \mu}(k,t)$, then
\begin{align*}
G(z,t) &= e^{-t \left(\left(\mu + \lambda\left(1-\frac{1}{z}\right)\right)^{\alpha} - \mu^{\alpha}\right)}\\
& = e^{t\mu^{\alpha}}\sum_{r=0}^{\infty}\frac{(-t)^r}{r!}\left(\mu + \lambda\left(1-\frac{1}{z}\right)\right)^{\alpha r}\\
& = e^{t\mu^{\alpha}} \sum_{r=0}^{\infty}\frac{(-t)^r}{r!}\sum_{m=0}^{\infty}{\alpha r \choose m} \mu^{m} \lambda^{\alpha r -m} \left(1-\frac{1}{z}\right)^{\alpha r - m}\\
& = e^{t\mu^{\alpha}} \sum_{r=0}^{\infty}\frac{(-t)^r}{r!}\sum_{m=0}^{\infty}{\alpha r \choose m} \mu^{m} \lambda^{\alpha r -m} \sum_{k=0}^{\infty} {\alpha r - m \choose k}(-1)^k\frac{1}{z^k}\\
& = \sum_{k=0}^{\infty}z^{-k}\left[(-1)^k\sum_{m=0}^{\infty}\mu^m\lambda^{-m}\sum_{r=0}^{\infty}\frac{(-t)^r}{r!}{\alpha r \choose m} {\alpha r -m \choose k} \right],
\end{align*}
the result follows by taking the coefficient of $z^{-k}.$
\end{proof}
\noindent By a standard conditioning argument, it follows that 
\begin{align*}
\mathbb{E}(N^{\alpha,\mu}(t)) = \mathbb{E}( \mathbb{E}(N(S_{\alpha,\mu}(t))|S_{\alpha,\mu}(t)) = \mathbb{E}(\lambda S_{\alpha,\mu}(t)) = \lambda \alpha \mu^{\alpha-1}t.
\end{align*}
Further,
\begin{align*}
\mathrm{Var}(N^{\alpha,\mu}(t)) &= \mathbb{E}(\mathrm{Var}(N(S_{\alpha,\mu}(t))|S_{\alpha,\mu}(t)) +  \mathrm{Var}(\mathbb{E}(N(S_{\alpha,\mu}(t))|S_{\alpha,\mu}(t))\\
& = \mathbb{E}(\lambda S_{\alpha,\mu}(t)) + \mathrm{Var}(\lambda S_{\alpha,\mu}(t))\\
& = \lambda \alpha \mu^{\alpha-1}t + \lambda^2 \alpha(1-\alpha) \mu^{\alpha-2}t.
\end{align*}
\begin{remark}
Using a similar argument as in Prop. \ref{prop5.1}, one can show that the two representations given in \eqref{tempered-SFPP} and \eqref{tempered-SFPP-Subodination} are equivalent.
\end{remark}

%

\subsection{The Tempered Time-Space-Fractional Poisson Process}
In this section we introduce and study tempered time-space-fractional Poisson process.
A subordination representation of tempered time-space-fractional Poisson process can be written as 
\begin{equation}\label{tempered-TSFPP-subordination}
N^{\alpha, \mu}_{\beta,\nu}: = N\left(S_{\alpha,\mu}(Y_{\beta,\nu}(t))\right) = N^{\alpha,\mu}(Y_{\beta,\nu}(t)),
\end{equation}
where $Y_{\beta,\nu}(t) = \inf\{r>0 : S_{\beta,\nu}(r) > t\}$ is the right-continuous inverse of tempered stable subordinator. Note that this process is non-Markovian due to the subordination component of $Y_{\beta,\nu}(t)$, which is not a Levy process. However, all the moments of this process are finite.\\

Alternatively, Taking a tempered fractional derivative in left side and tempered fractional shift operator in the right hand side of the equation \eqref{SFPP}, we obtained the governing difference-differential equation of the PMF of tempered time-space-fractional Poisson process. 
\begin{align}\label{tempered-TSFPP}
\frac{d^{\beta,\nu}}{dt^{\beta,\nu}}P^{\alpha, \mu}_{\beta,\nu}(k,t) = - ((\mu + \lambda(1-B))^{\alpha} - \mu^{\alpha}) P^{\alpha, \mu}_{\beta,\nu}(k,t),  \;\; \alpha\in(0,1], \;\;  \mu\geq 0,
\end{align}
where $\frac{d^{\beta,\nu}}{dt^{\beta,\nu}}$ is the Caputo tempered fractional derivative of order $\beta\in (0,1)$ with tempering parameter $\nu>0$.
The governing equation \eqref{tempered-TSFPP} reduces to the governing equation of SFPP by taking $\mu =\nu =0$ and $\beta =1$. The Riemann-Liouville tempered fractional derivative is defined by (see e.g. Alrawashdeh et al. 2016)
\begin{align*}
\mathbb{D}_t^{\beta, \nu} g(t) = e^{-\nu t} \mathbb{D}_t^{\beta}[e^{\nu t}g(t)] - \nu^{\beta}g(t),
\end{align*}
where
\begin{equation*}
\mathbb{D}_t^{\beta}g(t) = \frac{1}{\Gamma(1-\beta)}\frac{d}{dt}\int_{0}^{t}\frac{g(u)du}{(t-u)^{\beta}}
\end{equation*}
is the usual Riemann-Liouville fractional derivative of order $\beta\in(0,1)$. The Caputo derivative is defined by
$$
\frac{d^{\beta,\nu}}{dt^{\beta,\nu}}g(t) =  \mathbb{D}_t^{\beta}g(t) - \frac{g(0)}{\Gamma(1-\beta)}\int_{t}^{\infty}e^{-\nu r}\beta r^{-\beta-1}dr.
$$
The Laplace transform for the Caputo tempered fractional derivative for a function $g(t)$ satisfies
\begin{equation}
\mathcal{L}\left[\frac{d^{\beta,\nu}}{dt^{\beta,\nu}}g\right](s) = ((s+\nu)^{\beta}-\nu^{\beta})\tilde{g}(s) - s^{-1}((s+\nu)^{\beta}-\nu^{\beta}) g(0). 
\end{equation}
Further,
\begin{equation}\label{LT-Riemann-Liouville}
\mathcal{L}\left[\mathbb{D}_t^{\beta}g(t)\right](s) = ((s+\nu)^{\beta}-\nu^{\beta})\tilde{g}(s).
\end{equation}
Suppose $Y_{\beta, \nu}(t)$ be the right continuous inverse of tempered stable subordinator $S_{\beta,\nu}(t)$, defined by
\begin{equation}\label{def:itss}
Y_{\beta, \nu}(t)(t) = \inf\{y>0: S_{\beta, \nu}(y)> t\},~~ t\geq 0.
\end{equation}
The process $Y_{\beta, \nu}(t)(t)$ is called inverse tempered stable (ITS) subordinator.
A driftless subordinator $D(t)$ with L\'evy measure $\pi_D$ and density function $f$ has the L\'evy-Khinchin representation (see Bertoin, 1996)
 \begin{equation}
 \int_{0}^{\infty}e^{-ux}f_{D(t)}(x)dx = e^{-t\Psi_D(u)},
 \end{equation}
 where
 \begin{equation}\label{lsym}
 \Psi_D(u) = \int_{0}^{\infty}(1-e^{-uy})\pi_D(dy),~~ u>0,
 \end{equation}
 is called the Laplace exponent.
The L\'evy measure density  corresponding to a tempered stable subordinator is given by 
\begin{equation*}
\pi_{S_{\beta, \nu}}(u) = \frac{\beta}{\Gamma(1-\beta)}\frac{e^{-\nu
u}}{u^{\beta+1}}, \; u>0,
\end{equation*}
which satisfies the condition $\int_{0}^{\infty} \pi_{S_\beta, \nu}(u) du= \infty.$ Let $\mathcal{L}_{t\rightarrow s}(g(x,t)) = \bar{g}(x,s)$ be the Laplace transform (LT) of $g$ with respect to time variable $t$. Using Theorem 3.1 of Meerschaert and Scheffler (2008), the LT of  the density $h_{\beta, \nu}(x,t)$ of $Y_{\beta, \nu}(t)$ with respect to the time variable $t$ is given by
\begin{equation}\label{LT-ITSS}
\tilde{h}_{\beta, \nu}(x,s) = \frac{1}{s}\big((s+\nu)^{\beta}-\nu^{\beta}\big)e^{-x\big((s+\nu)^{\beta}-\nu^{\beta}\big)}.
\end{equation}
From \eqref{LT-ITSS}, it follows
\begin{align*}
\frac{\partial }{\partial x} \tilde{h}_{\beta, \nu}(x,s) = -\big((s+\nu)^{\beta}-\nu^{\beta}\big)\tilde{h}_{\beta, \nu}(x,s).
\end{align*}
Now by inverting the LT with the help of \eqref{LT-Riemann-Liouville}, it follows
\begin{equation}\label{ITSS-RL}
\frac{\partial }{\partial x}h_{\beta, \nu}(x,t) = - \mathbb{D}_t^{\beta,\nu} h_{\beta, \nu}(x,t).
\end{equation}
Further,
\begin{align}\label{Intermediate-LT}
- \frac{\partial }{\partial x} \tilde{h}_{\beta, \nu}(x,s) &= \left[\big((s+\nu)^{\beta}-\nu^{\beta}\big)\tilde{h}_{\beta, \nu}(x,s) - s^{-1}\big((s+\nu)^{\beta}-\nu^{\beta}\big)h(x,0)\right]\nonumber\\
&\hspace{2cm} + s^{-1}\big((s+\nu)^{\beta}-\nu^{\beta}\big)h(x,0).
\end{align} 
For inverting the LT in \eqref{Intermediate-LT} we will use the generalized Mittag-Leffler function, therefore we introduce it here.
The generalized Mittag-Leffler function, introduced by Prabhakar (1971), is defined by
\begin{equation}
M_{a,b}^c(z) = \sum_{n=0}^{\infty}\frac{(c)_n}{\Gamma(a n + b)}\frac{z^n}{n!},
\end{equation}
where $a, b, c \in \mathbb{C}$ with $\mathcal{R}(b) >0$ and $(c)_n$ is Pochhammer symbol see \eqref{Pochhammer}. When $c = 1$, it reduces to Mittag-Leffler function. Further,
\begin{equation}\label{gml-0}
M_{a,b}^c(0) = \frac{(c)_0}{\Gamma{(b)}} = \frac{1}{\Gamma{(b)}}.
\end{equation}
The function $F(s) = \frac{s^{ac - b}}{(s^a+ \eta)^c}$ has the inverse LT (see e.g. Kumar et al, 2018)
\begin{equation}\label{lt-for-special-func}
\mathcal{L}^{-1}[F(s)] = t^{b-1}M_{a,b}^{c}(-\eta t^a).
\end{equation}
Moreover, 
\begin{equation}\label{Ilt-for-special-func}
\mathcal{L}^{-1}\left[\frac{1}{s(s+\nu)^{-\beta}}\right] = t^{-\beta}M_{1,1-\beta}^{-\beta}(-\nu t),
\end{equation}
which follows by taking $a=1, b= 1-\beta, c=-\beta$ and $\eta =\nu$. Now by inverting the LT in \eqref{Intermediate-LT} with the help of \eqref{Ilt-for-special-func}, it follows
\begin{equation}\label{ITSS-fpde}
-\frac{\partial }{\partial x}h_{\beta, \nu}(x,t) =  \frac{\partial^{\beta,\nu}}{\partial t^{\beta,\nu}} h_{\beta, \nu}(x,t) + \left(t^{-\beta}M_{1,1-\beta}^{-\beta}(-\nu t) - \nu^{\beta} \right) \delta(x),
\end{equation}
where $h_{\beta, \nu}(x,0) = \delta(x)$ is the Dirac delta function.
\noindent Taking $\nu =0$ in \eqref{ITSS-fpde} and using \eqref{gml-0}, it follows
\begin{equation}
- \frac{\partial}{\partial x} h_{\beta,0}(x,t) = \frac{\partial^{\beta}}{\partial t^{\beta}}h_{\beta,0}(x,t) + \frac{t^{-\beta}}{\Gamma(1-\beta)}\delta(x),
\end{equation}
which is the governing equation of the density function of inverse $\beta$-stable subordiantor, which complements the result obtained in literature (see e.g. Meerschaert and Straka, 2013; Hahn et al. 2011).

\begin{proposition}\label{Caputo-TSTFPP}
The PMF of tempered time-space-fractional Poisson process defined in \eqref{tempered-TSFPP-subordination} satisfies \eqref{tempered-TSFPP}. 
\end{proposition}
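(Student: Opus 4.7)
The plan is to mirror the proof of Proposition \ref{prop5.1}, supplemented by a Laplace transform verification in $t$. Writing $\Phi(z):=(\mu+\lambda(1-z^{-1}))^{\alpha}-\mu^{\alpha}$ and $\Psi(s):=(s+\nu)^{\beta}-\nu^{\beta}$, the two sides of the equivalence can each be encoded as a double Laplace/$z$-transform $\tilde G(z,s)$, and I would show that they agree.

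First I would compute the $z$-transform of the subordinated process by conditioning on $Y_{\beta,\nu}(t)$. The Laplace transform \eqref{tempered-LT} of the tempered stable subordinator gives
\begin{align*}
\mathbb{E}\bigl[z^{-N^{\alpha,\mu}(y)}\bigr]=\mathbb{E}\bigl[e^{-\lambda(1-z^{-1})S_{\alpha,\mu}(y)}\bigr]=e^{-y\Phi(z)},
\end{align*}
so that, by independence of $N^{\alpha,\mu}$ and $Y_{\beta,\nu}$,
\begin{align*}
G(z,t):=\sum_{k=0}^{\infty}z^{-k}\mathbb{P}\bigl(N^{\alpha,\mu}_{\beta,\nu}(t)=k\bigr)=\mathbb{E}\bigl[e^{-\Phi(z)Y_{\beta,\nu}(t)}\bigr].
\end{align*}
Taking the Laplace transform in $t$ and invoking the density formula \eqref{LT-ITSS} for $Y_{\beta,\nu}$ yields
\begin{align*}
\mathcal{L}[G(z,\cdot)](s)=\int_{0}^{\infty}e^{-\Phi(z)x}\,\frac{\Psi(s)}{s}\,e^{-x\Psi(s)}\,dx=\frac{\Psi(s)}{s\bigl(\Psi(s)+\Phi(z)\bigr)}.
\end{align*}

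Second, I would apply the $z$-transform to the proposed equation \eqref{tempered-TSFPP}. Using the binomial expansion of $(\mu+\lambda(1-B))^{\alpha}$ together with the shift property \eqref{property of ztrans} (which turns $B$ into multiplication by $z^{-1}$ on the $z$-transform side, as already exploited to obtain \eqref{intermediate-eq}), the right-hand side becomes $-\Phi(z)G(z,t)$. Combined with the initial condition $G(z,0)=1$, this yields the scalar fractional ODE
\begin{align*}
\frac{d^{\beta,\nu}}{dt^{\beta,\nu}}G(z,t)=-\Phi(z)G(z,t),\qquad G(z,0)=1.
\end{align*}
Applying the Caputo-tempered Laplace formula stated just before \eqref{LT-Riemann-Liouville} and solving algebraically for $\tilde G(z,s)$ gives precisely the same expression $\Psi(s)/[s(\Psi(s)+\Phi(z))]$ derived above. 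By uniqueness of both the Laplace and $z$-transforms, the two marginal laws coincide, which proves the proposition.

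The one technical step requiring care is the action of the tempered fractional shift operator $((\mu+\lambda(1-B))^{\alpha}-\mu^{\alpha})$ on a $z$-transformed sequence; this is handled by justifying termwise the binomial series in $B$ (paralleling \eqref{difference opr}) and appealing to absolute convergence for $|z|$ outside the radius of convergence of $G(z,t)$. Beyond this bookkeeping the argument is essentially a Laplace/$z$-transform matching, and no genuinely new estimate is required.
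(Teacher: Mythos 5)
Your argument is correct at the level of rigor the paper itself works at, but it is a genuinely different proof from the one in the paper. The paper proves Proposition \ref{Caputo-TSTFPP} in the ``time domain'': it writes $P^{\alpha,\mu}_{\beta,\nu}(k,t)=\int_0^\infty P^{\alpha,\mu}(k,y)h_{\beta,\nu}(y,t)\,dy$, applies $\frac{d^{\beta,\nu}}{dt^{\beta,\nu}}$ under the integral, invokes the governing equation \eqref{ITSS-fpde} for the inverse tempered stable density $h_{\beta,\nu}$ (whose derivation needed the generalized Mittag--Leffler inversion \eqref{Ilt-for-special-func} and produces a Dirac-delta correction term), integrates by parts in $y$, kills the boundary and delta terms using $P^{\alpha,\mu}(k,0)=\delta_{k,0}$, and finally uses the tempered SFPP equation \eqref{tempered-SFPP}. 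You instead match double transforms: the subordination representation plus \eqref{tempered-LT} and \eqref{LT-ITSS} give $\tilde G(z,s)=\Psi(s)/\bigl[s\bigl(\Psi(s)+\Phi(z)\bigr)\bigr]$, and the $z$-transformed equation together with the Laplace formula for the Caputo tempered derivative yields the same expression, so the claim follows from uniqueness of the two transforms --- essentially the strategy the paper uses for Proposition \ref{prop5.1} in the non-tempered case, now carried over to the tempered setting. Your route is shorter and bypasses both the fractional PDE for $h_{\beta,\nu}$ and the Mittag--Leffler correction term; the paper's route is heavier but works directly at the level of state probabilities and establishes \eqref{ITSS-fpde} along the way, which is then reused in the Riemann--Liouville remark following the proposition. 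Two small points to tighten: the evaluation $\mathbb{E}\bigl[e^{-\lambda(1-z^{-1})S_{\alpha,\mu}(y)}\bigr]=e^{-y\Phi(z)}$ uses \eqref{tempered-LT} at the argument $\lambda(1-z^{-1})$, so you should restrict to real $z>1$ (which suffices to identify the coefficients) or argue by analytic continuation; and the concluding phrase should be that the subordinated PMF coincides with the solution of \eqref{tempered-TSFPP} (equivalently, that the Laplace-transform identity can be read backwards to give $\frac{d^{\beta,\nu}}{dt^{\beta,\nu}}G(z,t)=-\Phi(z)G(z,t)$ and then the coefficientwise equation), rather than that ``two marginal laws coincide.''
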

\begin{proof}
Note that,
\begin{align}\label{intermediate-TemperedTSFPP}
P^{\alpha, \mu}_{\beta,\nu}(k,t) &= \mathbb{P}\left(N^{\alpha,\mu}(Y_{\beta,\nu}(t)) = k \right) = \mathbb{E} \left( \mathbb{P}\left(N^{\alpha,\mu}(Y_{\beta,\nu}(t)) = k |Y_{\beta,\nu}(t)\right) \right)\nonumber\\
& = \int_{0}^{\infty}P^{\alpha,\mu}(k,y)h_{\beta,\nu}(y,t)dy,
\end{align}
where $P_{\alpha,\mu}(k,t)$ is the PMF of tempered space-fractional Poisson process and $h_{\beta,\nu}(x,t)$ is the probability density function of inverse tempered stable subordinator.
Using \eqref{intermediate-TemperedTSFPP} and \eqref{ITSS-fpde}
\begin{align*}
\frac{d^{\beta,\nu}}{dt^{\beta,\nu}}P^{\alpha, \mu}_{\beta,\nu}(k,t) &= \int_{0}^{\infty}P^{\alpha,\mu}(k,y)\frac{d^{\beta,\nu}}{dt^{\beta,\nu}}h_{\beta,\nu}(y,t)dy\\
& = -\int_{0}^{\infty}P^{\alpha,\mu}(k,y)\frac{\partial }{\partial y}h_{\beta, \nu}(y,t)dy\\
&\hspace{1cm} - \left(t^{-\beta}M_{1,1-\beta}^{-\beta}(-\nu t) - \nu^{\beta} \right)\int_{0}^{\infty}P^{\alpha,\mu}(k,y) \delta(y))dy\\
& = - P^{\alpha,\mu}(k,y)h_{\beta, \nu}(y,t)|_{y=0}^{y=\infty} + \int_{0}^{\infty}\frac{d}{dy}P^{\alpha,\mu}(k,y)h_{\beta, \nu}(y,t)dy \\
&\hspace{1cm}- \left(t^{-\beta}M_{1,1-\beta}^{-\beta}(-\nu t) - \nu^{\beta} \right)P^{\alpha,\mu}(k,0)\\
& = \int_{0}^{\infty}\frac{d}{dy}P^{\alpha,\mu}(k,y)h_{\beta, \nu}(y,t)dy\\
& =  - ((\mu + \lambda(1-B))^{\alpha} - \mu^{\alpha})\int_{0}^{\infty}P^{\alpha,\mu}(k,y)h_{\beta, \nu}(y,t)dy\\
& = - ((\mu + \lambda(1-B))^{\alpha} - \mu^{\alpha})P^{\alpha, \mu}_{\beta,\nu}(k,t),
\end{align*}
using \eqref{tempered-SFPP} and the fact that $P^{\alpha,\mu}(k,0) =0,\; k>0.$
\end{proof}

\begin{remark}
Using similar argument as in Prop. \ref{Caputo-TSTFPP} with \eqref{ITSS-RL}, it follows that
\begin{equation*}
\mathbb{D}^{\beta,\nu}_t P^{\alpha, \mu}_{\beta,\nu}(k,t) = - ((\mu + \lambda(1-B))^{\alpha} - \mu^{\alpha})P^{\alpha, \mu}_{\beta,\nu}(k,t),\;k>0,\;t>0.
\end{equation*}
\end{remark}

\begin{proposition}
The state probabilities for tempered time-space-fractional Poisson process are given by
\begin{align}\label{PMF-tempered-TSFPP}
P^{\alpha,\mu}_{\beta,\nu}(k,t)
=(-1)^{k}e^{-t\nu} \sum_{m=0}^{\infty}{t}^{m}{\nu}^{m}\sum_{r=0}^{\infty}(-t^{\beta})^{r}M^{r}_{\beta,\beta r+m+1}({t^{\beta}}{\nu}^{\beta})\sum_{h=0}^{r}{r \choose h}(-{\mu}^{\alpha})^{r-h}\\\times \sum_{l=0}^{\infty}{\alpha h \choose l}{\alpha h-l \choose k}{\mu}^{l}{\lambda}^{\alpha h-l}, \; k=0,1,\ldots, \; \mu \geq 0, \; \nu \geq 0, \; t\geq0.
\end{align}
\end{proposition}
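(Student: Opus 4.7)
The plan is to compute the $z$-transform $G(z,t):=\sum_{k\geq 0}z^{-k}P^{\alpha,\mu}_{\beta,\nu}(k,t)$ via the subordination representation \eqref{tempered-TSFPP-subordination} and then read off the coefficient of $z^{-k}$. Let $\eta(z):=(\mu+\lambda(1-z^{-1}))^{\alpha}-\mu^{\alpha}$ denote the Laplace exponent associated with \eqref{tempered-SFPP}, so that $\mathbb{E}[z^{-N^{\alpha,\mu}(y)}]=e^{-y\eta(z)}$. Conditioning on $Y_{\beta,\nu}(t)$ in $N^{\alpha,\mu}(Y_{\beta,\nu}(t))$ gives
\begin{equation*}
G(z,t)=\mathbb{E}\bigl[e^{-\eta(z)Y_{\beta,\nu}(t)}\bigr]=\sum_{r=0}^{\infty}\frac{(-\eta(z))^{r}}{r!}\mathbb{E}[Y_{\beta,\nu}(t)^{r}],
\end{equation*}
so the task reduces to computing the moments of $Y_{\beta,\nu}(t)$ and then expanding the $z$-dependence.

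For the moments I would take the Laplace transform in $t$ and use \eqref{LT-ITSS} to get
\begin{equation*}
\mathcal{L}_{t\to s}\bigl\{\mathbb{E}[Y_{\beta,\nu}(t)^{r}]\bigr\}=\int_{0}^{\infty}x^{r}\tilde h_{\beta,\nu}(x,s)\,dx=\frac{r!}{s\bigl[(s+\nu)^{\beta}-\nu^{\beta}\bigr]^{r}}.
\end{equation*}
To invert, I would expand $1/s=\sum_{m\geq 0}\nu^{m}(s+\nu)^{-(m+1)}$, apply the translation rule $\mathcal{L}^{-1}\{F(s+\nu)\}(t)=e^{-\nu t}\mathcal{L}^{-1}\{F(s)\}(t)$, and invoke the Prabhakar identity \eqref{lt-for-special-func} with $a=\beta$, $b=\beta r+m+1$, $c=r$, $\eta=-\nu^{\beta}$. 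This yields
\begin{equation*}
\mathbb{E}[Y_{\beta,\nu}(t)^{r}]=r!\,e^{-\nu t}\sum_{m=0}^{\infty}\nu^{m}\,t^{\beta r+m}\,M^{r}_{\beta,\beta r+m+1}(\nu^{\beta}t^{\beta}).
\end{equation*}

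Substituting back into $G(z,t)$, the factors of $r!$ cancel, and it remains only to unpack the $z$-dependence in $\eta(z)^{r}$. I would first use the binomial theorem on $\eta(z)^{r}=\bigl((\mu+\lambda(1-z^{-1}))^{\alpha}-\mu^{\alpha}\bigr)^{r}$ to obtain $\sum_{h=0}^{r}\binom{r}{h}(-\mu^{\alpha})^{r-h}(\mu+\lambda(1-z^{-1}))^{\alpha h}$; then expand $(\mu+\lambda(1-z^{-1}))^{\alpha h}=\sum_{l\geq 0}\binom{\alpha h}{l}\mu^{l}\lambda^{\alpha h-l}(1-z^{-1})^{\alpha h-l}$ via the generalised binomial series; and finally expand $(1-z^{-1})^{\alpha h-l}=\sum_{k\geq 0}\binom{\alpha h-l}{k}(-z^{-1})^{k}$ by \eqref{difference opr}. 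Reading off the coefficient of $z^{-k}$ in the resulting triple expansion produces exactly \eqref{PMF-tempered-TSFPP}.

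The real technical obstacle is the Laplace inversion in the second step: the tempering forces working with the non-standard symbol $(s+\nu)^{\beta}-\nu^{\beta}$, and the factor $1/s$ outside it cannot simply be absorbed into a Prabhakar template. The key manoeuvre is the geometric-series expansion of $1/s$ in powers of $(s+\nu)^{-1}$, which, together with the shift rule, is precisely what generates both the outer factor $e^{-\nu t}$ and the extra series in $m$ and $\nu$ seen in \eqref{PMF-tempered-TSFPP}. Everything else is bookkeeping, with absolute convergence of the nested series (valid for $|z|$ large enough that $\eta(z)$ is small compared to the tempered Laplace exponent) justifying the interchanges of summation.
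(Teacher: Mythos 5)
Your proposal is correct, and it reaches the paper's key intermediate object by a genuinely different route. The paper works from the governing equation \eqref{tempered-TSFPP}: it applies the $z$-transform, then the Laplace transform of the Caputo tempered derivative, obtaining $\mathcal{L}[G(z,t)]=\frac{1}{s}\bigl(1+\eta(z)/((s+\nu)^{\beta}-\nu^{\beta})\bigr)^{-1}$ and expanding it as a geometric series; you instead start from the subordination representation \eqref{tempered-TSFPP-subordination}, condition on $Y_{\beta,\nu}(t)$, and expand $\mathbb{E}[e^{-\eta(z)Y_{\beta,\nu}(t)}]$ in moments computed from \eqref{LT-ITSS} — after the $r!$ cancellation these two expansions are term-by-term identical, so the approaches converge quickly. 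Your route is more directly tied to the process as defined in \eqref{tempered-TSFPP-subordination} (the paper needs Proposition \ref{Caputo-TSTFPP} to connect that process to the equation it actually solves), at the cost of an extra interchange of expectation and series, which is justified because $Y_{\beta,\nu}(t)$ has an entire moment generating function (equivalently, by your large-$|z|$ absolute-convergence remark — the same level of formality as the paper's assumption $|\eta(z)|<|(s+\nu)^{\beta}-\nu^{\beta}|$). The second genuine difference is the inversion of $\frac{1}{s((s+\nu)^{\beta}-\nu^{\beta})^{r}}$: the paper writes the $1/s$ factor as an integral from $0$ to $t$ and invokes the Kilbas--Saigo--Saxena fractional-integration identity for Prabhakar functions (after expanding the exponential in powers of $t-y$), whereas you expand $1/s=\sum_{m\ge0}\nu^{m}(s+\nu)^{-(m+1)}$ and apply the template \eqref{lt-for-special-func} together with the shift rule; your version is arguably cleaner, makes the origin of the factor $e^{-\nu t}$ and of the $m$-series more transparent, and avoids the slight mismatch in the paper between the stated Kilbas identity (whose upper index is tied to the exponent of $(t-y)$) and the way it is used. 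The final unpacking of $\eta(z)^{r}$ via the three binomial expansions and the extraction of the coefficient of $z^{-k}$ is the same in both arguments.
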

\begin{proof}
Suppose G(z,t) is the $z$-transform of $P^{\alpha,\mu}_{\beta,\nu}$, then
\begin{align*}
\frac{d^{\beta,\nu}}{dt^{\beta,\nu}}G(z,t) = - ((\mu + \lambda(1-z^{-1}))^{\alpha} - \mu^{\alpha})G(z,t),  \;\; \beta\in(0,1),\;\alpha\in(0,1], \;\;  \mu\geq 0, \; \nu\geq 0,
\end{align*}
using Laplace transform with respect to the time variable $t$ and assuming $|(\mu + \lambda(1-z^{-1}))^{\alpha} - \mu^{\alpha}| < |(s+\nu)^{\beta}-\nu^{\beta}|$, leads to
\begin{align*}
\mathcal{L}\left[G(z,t)\right]=&\frac{1}{s}\left(1+\frac{((\mu + \lambda(1-z^{-1}))^{\alpha} - \mu^{\alpha})}{(s+\nu)^{\beta}-\nu^{\beta}}\right)^{-1}\\
= &\sum_{r=0}^{\infty}(-1)^{r}\frac{((\mu + \lambda(1-z^{-1}))^{\alpha} - \mu^{\alpha})^{r}}{s((s+\nu)^{\beta}-\nu^{\beta})^{r}},
\end{align*}
suppose $F(s)=\frac{1}{(s^{\beta}-\nu^{\beta})^{r}}$ then the inverse LT of F(s) from \eqref{lt-for-special-func}, shifting property of LT $G(s)=F(s+\nu)$ then the inverse LT of $G(s)=e^{-\nu t}.t^{\beta r-1} M^{r}_{\beta,\beta r}(\nu^{\beta} t^{\beta})$,
\begin{align*}
\mathcal{L}^{-1}\left[\frac{G(s)}{s}\right]=\int_{0}^{t}e^{-\nu y} y^{\beta r-1}M^{r}_{\beta,\beta r}(\nu^{\beta} y^{\beta})dy 
\end{align*}
using the property from  Kilbas et al. (2004), 
\begin{align}
\int_{0}^{t}y^{\mu-1} M_{\rho,\mu}^{\nu}(w y^{\rho})(t-y)^{\nu-1}dy=\Gamma({\nu}) t^{\nu+\mu-1} M_{\rho,\mu+\nu}^{\nu}(w t^{\rho}).
\end{align}
Then $\mathcal{L}^{-1}{\left[\frac{G(s)}{s}\right]}=e^{-t \nu}\sum_{m=0}^{\infty}{\nu}^{m}t^{\beta r+m} M^{r}_{\beta,\beta r+m+1}(\nu^{\beta} t^{\beta})$, for simplicity we assume $H(t)=\mathcal{L}^{-1}\left[\frac{G(s)}{s}\right]$. Now $G(z,t)$
\begin{align*}
G(z,t)=&\sum_{r=0}^{\infty}(-1)^{r}\sum_{h=0}^{r}{r \choose h}(-{\mu}^{\alpha})^{r-h}\sum_{l=0}^{\infty}{\alpha h \choose l}{\mu}^{l}(\lambda(1-z^{-1}))^{\alpha h-l} H(t)\\
=&\sum_{r=0}^{\infty}(-1)^{r}\sum_{h=0}^{r}{r \choose h}(-{\mu}^{\alpha})^{r-h}\sum_{l=0}^{\infty}{\alpha h \choose l}{\mu}^{l}{\lambda}^{\alpha h-l} \sum_{k=0}^{\infty}(-1)^{k}{\alpha h-l \choose k} (z)^{-k} H(t)\\
=&\sum_{k=0}^{\infty}z^{-k}\left[(-1)^{k}e^{-t\nu}\sum_{m=0}^{\infty}{t}^{m}{\nu}^{m}\sum_{r=0}^{\infty}(-t^{\beta})^{r}M^{r}_{\beta,\beta r+m+1}({t}^{\beta}{\nu}^{\beta})\sum_{h=0}^{r}{r \choose h}(-{\mu}^{\alpha})^{r-h}\right.\\
&\hspace{1.5cm}\left.\times \sum_{l=0}^{\infty}{\alpha h \choose l}{\alpha h-l \choose k}{\mu}^{l}{\lambda}^{\alpha h-l}\right],
\end{align*}
the result follows by taking the coefficient of $z^{-k}.$
\end{proof}
\begin{remark}
For $\mu=0,\; \nu=0$, eq. \eqref{PMF-tempered-TSFPP}  is equivalent to putting $m=0,\; l=0$ and $r=h$, which reduces to
\begin{equation}
P^{\alpha, 0}_{\beta, 0}(k,t) = (-1)^k \sum_{r=0}^{\infty} (-1)^r{\alpha r \choose k}\frac{\lambda^{\alpha r} t^{\beta r}}{\Gamma(\beta r+1)},
\end{equation}
 whic is same as the PMF of time-space-fractional Poisson process given in\eqref{TSPP}.
\end{remark}

\subsection{Fractional Equation with Gegenbauer Type Fractional Operator and Generalized Poisson Distributions}
In this section, we introduce new class of fractional differential equations and their solutions.\\ 
We consider the backward-shift fractional operator
\begin{align*}
\nabla_u^{d}&=(1-2uB+B^{2})^{d}=(1-2cos(\nu)B+B^2)^{d}\\
&=[(1-e^{i\nu}B)(1-e^{-i\nu}B)]^{d}\\
 &= -\lambda^{2d} (1-2uB+B^{2})^{d} P_{d}^u(k,t),\; |u|\leq1,\; d\in(0,1/2],
\end{align*}
which often appears in the study of the so-called Gegenbaurer times series (see Beran                                             1994, p. 213, Gray et al. 1989 or Espejo et al. 2014 ) \\
Note that for $u=1$ the fractional operator $\nabla_u^{d}=(1-B)^{d}$ reduces to \eqref{STFPP} with $\alpha=2d\in(0,1)$.
where $u=cos(\nu)$ or $\nu =cos^{-1}(u)$. We introduce the following fractional equation for unknown function $P_d^u(k,t), t\geq 0, k = 0,1,\ldots$ 
\begin{align}\label{Gegenbauer-DDE}
\frac{d}{dt}P_d^u(k,t)= -\lambda^{2d}\nabla_u^{d} \left(P_d^u(k,t)\right) 
 &= -\lambda^{2d} (1-2uB+B^{2})^{d} P_{d}^u(k,t), k >0, \; d\in(0,1/2],\\
\frac{d}{dt}P_d^u(0,t)&=-\lambda^{2d}P^{u}_{d}(0,t),
\end{align}
with initial conditions
\begin{equation}\label{initial-conditions1}
  P_{d}^u(k,0) = \delta_{k,0}.
\end{equation}
Using the $z$-transform in both side, it follows
\begin{align*}
 \frac{d}{dt}\left[\mathcal{Z}{P_{d}^u(k,t)}\right]=-\lambda^{2d}[\mathcal{Z}\{{(1-2uB+B^{2})^dP_{d}^u(k,t)\}}].
\end{align*}
Expanding the fractional difference operator as 
$$(1-2uB+B^{2})^d= \sum_{j=0}^{\infty}\sum_{k=0}^{\infty}(-1)^{j+k}{d \choose j}{d \choose k}(e^{i\nu})^{j}(e^{-i\nu})^{k}B^{j+k},$$ 
leads to
\begin{align}\label{intermediate-eq1}
     \frac{d}{dt}\left[\mathcal{Z}P{_{d}^u(k,t)}\right]=\left[-\lambda^{2d}\left(1-\frac{2u}{z}+\frac{1}{z^{2}}\right)^d\right] \mathcal{ Z}{P_{d}^u(k,t)}.
\end{align}
Now solve the equation \eqref{intermediate-eq1} for $\mathcal{Z}{P_{d}^u(k,t)}$, we obtain
$$
 \mathcal{ Z}{P_{d}^u(k,t)} = A e^{-{\lambda}^{2d}\left(1-\frac{2u}{z}+\frac{1}{z^{2}}\right)^{d} t}.
 $$
 Using initial conditions in \eqref{initial-conditions1}, it follows that $A=1.$\\
\begin{equation}\label{PGF}
\mathcal{ Z}{P_{d}^u(k,t)}=  e^{-{\lambda}^{2d}\left(1-\frac{2u}{z}+\frac{1}{z^{2}}\right)^{d} t}\\
\end{equation}
\begin{align*}
 \mathcal{ Z}{P_{d}^u(k,t)}& =\left[1+\frac{(-\lambda)^{2d} t}{1!}\left\{1+ \frac{1-2uz}{z^{2}}\right\}^{d} +\ldots +\frac{(-\lambda)^{2kd} t^k}{k!}\left\{1+ \frac{1-2uz}{z^{2}}\right\}^{kd}+\ldots\right] \\
 &=1+\frac{(-\lambda)^{2d} t}{1!}\left\{\sum_{k=0}^{\infty}{d \choose k}\frac{\sum_{n=0}^{k}{k \choose n}(-2uz)^{n}}{z^{2k}}\right\}+\ldots\\
&\hspace{1cm}+\frac{(-\lambda)^{2kd} t^{k}}{k!}\left\{\sum_{k=0}^{\infty}{kd \choose k}\frac{\sum_{n=0}^{k}{k \choose n}(-2uz)^{n}}{z^{2k}}\right\}+\ldots\\
\end{align*}
With the help of coefficient of $z^{-k}$, the inverse $z$-transform gives
\begin{align*}
P_{d}^u(k,t)&=\sum_{r=0}^{\infty}\frac{(-\lambda)^{2rd}t^r}{r!}\left\{{rd \choose p}\cdot {p\choose 0} + (2u)^{2}{rd \choose p+1}\cdot {p+1\choose 2} + \ldots \right.\\ 
& \left. + (2u)^{2p-2}{rd \choose 2p-1}\cdot {2p-1\choose 2p-2} + 2^{2p}{\alpha \choose 2p}\cdot {2p\choose 2p}\right\},\; d\in(0, 1/2],\; k=2p.
\end{align*}
We have the following proposition.
\begin{proposition}
Solution of the initial value problem \eqref{Gegenbauer-DDE} is of the form
\begin{align}\label{Gegenbauer-PMF}
P_{d}^u(k,t)&=\sum_{r=0}^{\infty}\frac{(-\lambda)^{2rd}t^r}{r!}\left\{{rd \choose p}\cdot {p\choose 0} + (2u)^{2}{rd \choose p+1}\cdot {p+1\choose 2} + \ldots \right.\nonumber\\ 
& \left. + (2u)^{2p-2}{rd \choose 2p-1}\cdot {2p-1\choose 2p-2} + 2^{2p}{\alpha \choose 2p}\cdot {2p\choose 2p}\right\},\; d\in(0, 1/2],\; k=2p,
\end{align}
\end{proposition}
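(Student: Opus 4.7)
The plan is to follow the z-transform method used earlier in the paper for the space- and time-fractional Poisson processes, and to translate the informal derivation that immediately precedes the proposition into a clean argument. The main observation is that the Gegenbauer-type operator $\nabla_u^d=(1-2uB+B^2)^d$ acts by convolution in the $k$-variable, so under the bilateral z-transform it turns into multiplication by the scalar symbol $\bigl(1-2u/z+1/z^2\bigr)^d$.

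First, I apply $\mathcal{Z}$ in the $k$-variable to both sides of \eqref{Gegenbauer-DDE}. This converts the operator backward shift $B$ to multiplication by $z^{-1}$, and \eqref{Gegenbauer-DDE} becomes a one-parameter family of linear ODEs in $t$,
\[
\frac{d}{dt}\mathcal{Z}P_d^u(k,t)=-\lambda^{2d}\left(1-\frac{2u}{z}+\frac{1}{z^2}\right)^{d}\mathcal{Z}P_d^u(k,t).
\]
The initial condition \eqref{initial-conditions1} gives $\mathcal{Z}P_d^u(k,0)=\sum_k \delta_{k,0}z^{-k}=1$, so the unique solution is $\mathcal{Z}P_d^u(k,t)=\exp\!\bigl(-\lambda^{2d}(1-2u/z+1/z^2)^d t\bigr)$, exactly as in \eqref{PGF}.

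Second, I expand this z-transform as a power series in $z^{-1}$ so that the coefficient of $z^{-k}$ can be read off. I Taylor-expand the exponential in $t$, giving $\sum_{r\ge 0}\frac{(-\lambda^{2d}t)^r}{r!}(1-2u/z+1/z^2)^{rd}$, and then I expand $(1-2u/z+1/z^2)^{rd}$ by the generalized binomial theorem. Writing the inner factor as $1-(1/z)(2u-1/z)$ and applying the binomial series, followed by a second binomial expansion of $(2u-1/z)^j$, produces a double sum in nonnegative integer powers of $z^{-1}$.

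Third, I extract the coefficient of $z^{-k}$ for $k=2p$; by the definition of the inverse z-transform this coefficient is $P_d^u(k,t)$. The combinatorial bookkeeping is exactly the one that gave the identity \eqref{Binomial-Identity}: grouping the contributions in which the factor $(2u)$ appears to the power $2j$ yields the weight $\binom{rd}{p+j}\binom{p+j}{2j}$, and summing $j$ from $0$ to $p$ reproduces the bracketed expression in \eqref{Gegenbauer-PMF}. I expect this coefficient extraction to be the main obstacle: one must be careful to keep track of which pairs $(j,n)$ in the double binomial expansion of $(1-(2u/z)+1/z^{2})^{rd}$ contribute to a given power $z^{-2p}$, and in particular to verify that the odd powers $z^{-(2p+1)}$ cancel or are absorbed into the even-$k$ formula. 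The remaining steps — the z-transform, the ODE, and the exponential expansion — are routine consequences of the framework already set up in Sections 2 and 5.
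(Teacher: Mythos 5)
Your proposal is correct and follows essentially the same route as the paper: apply the $z$-transform to \eqref{Gegenbauer-DDE} to get a linear ODE in $t$, solve with the initial condition to obtain \eqref{PGF}, expand the exponential in powers of $t$, binomially expand $\left(1-\frac{2u}{z}+\frac{1}{z^{2}}\right)^{rd}$ by writing the quadratic as $1$ plus a correction, and read off the coefficient of $z^{-2p}$ by the same grouping that produced \eqref{Binomial-Identity}, the general term being $(2u)^{2j}\binom{rd}{p+j}\binom{p+j}{2j}$. Your coefficient bookkeeping (with $n=2j$, $m=p+j$, $0\le j\le p$) is exactly what the paper does implicitly, and your sign convention $\frac{(-\lambda^{2d}t)^{r}}{r!}$ is the correct reading of the paper's $\frac{(-\lambda)^{2rd}t^{r}}{r!}$.
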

\begin{remark}
Taking $\alpha =2d, k=2p$ in \eqref{space-fractional-PMF} and $u=1$ in \eqref{Gegenbauer-PMF} both the results coincides. 
\end{remark}
\noindent Further, we also introduce the following Gegenbauer type space-time-fractional Poisson process by replacing the integer order derivative in \eqref{Gegenbauer-DDE} by a fractional derivative of order $\beta$, as follows
\begin{align}\label{Gegenbauer-Space-Time-DDE}
\frac{d^\beta}{dt^\beta}Q_{d,\beta}^u(k,t) &= - \lambda^{d}\nabla_u^{d} Q_{d,\beta}^u(k,t)\nonumber\\
& =(-\lambda)^{2d}(1-2uB+b^{2}B)^{d} Q_{d,\beta}^u(k,t), \; d\in(0,1/2], \; \beta\in(0, 1], k=1,2,\ldots\\
 \frac{d^\beta}{dt^\beta}Q_{d,\beta}^u(0,t) &=({-\lambda}^{2d}Q^{u}_{d,\beta}(0,t)\;\;\mathrm{with}\;\; Q_{d}^{u,\beta}(k,0) = \delta_{k,0}.
\end{align}
Using a similar approach, we have the result.
\begin{proposition}
The PMF for the Gegenbauer type space-time-fractional Poisson process defined by the difference-differential equation in \eqref{Gegenbauer-Space-Time-DDE} is
\begin{align*}
Q_{d,\beta}^u(k,t)&=\sum_{r=0}^{\infty}\frac{(-\lambda)^{2rd}t^{r\beta}}{\Gamma(1+2p\beta)}\left\{{rd \choose p}\cdot {p\choose 0} + (2u)^{2}{rd \choose p+1}\cdot {p+1\choose 2} + \ldots \right.\\ 
& \left. + (2u)^{2p-2}{rd \choose 2p-1}\cdot {2p-1\choose 2p-2} + 2^{2p}{\alpha \choose 2p}\cdot {2p\choose 2p}\right\},\;   d\in(0,1/2].
\end{align*}
\end{proposition}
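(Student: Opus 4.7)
The plan is to mimic the $z$-transform and Laplace transform approach already used for the time-space-fractional Poisson process in Proposition on equation \eqref{TSPP}, combined with the Gegenbauer-type binomial expansion of \eqref{Binomial-Identity} that was applied in deriving \eqref{Gegenbauer-PMF}. First I would apply the $z$-transform to both sides of \eqref{Gegenbauer-Space-Time-DDE}. Using the shift property $\mathcal{Z}[B f(k)] = z^{-1}\mathcal{Z}[f(k)]$ together with the expansion
\[
(1-2uB+B^{2})^{d} = \sum_{j=0}^{\infty}\sum_{n=0}^{\infty}(-1)^{j+n}\binom{d}{j}\binom{d}{n}e^{i\nu(j-n)}B^{j+n},
\]
the equation reduces to the scalar fractional ODE in $t$
\[
\frac{d^{\beta}}{dt^{\beta}}G(z,t) = -\lambda^{2d}\bigl(1-2uz^{-1}+z^{-2}\bigr)^{d} G(z,t),
\]
where $G(z,t) = \mathcal{Z}\{Q_{d,\beta}^{u}(k,t)\}$, with initial value $G(z,0)=\sum_{k=0}^{\infty}z^{-k}\delta_{k,0}=1$.

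Next I would apply the Laplace transform in the time variable, invoking \eqref{LT-Caputo} for the Caputo--Djrbashian derivative, to obtain
\[
\widetilde{G}(z,s) \;=\; \frac{s^{\beta-1}}{s^{\beta}+\lambda^{2d}(1-2uz^{-1}+z^{-2})^{d}}.
\]
Inverting this Laplace transform via the standard identity $\mathcal{L}\bigl(E_{\beta,1}(-wt^{\beta})\bigr) = s^{\beta-1}/(s^{\beta}+w)$ (already used in \eqref{z-transform-TSFPP}) yields
\[
G(z,t) \;=\; E_{\beta,1}\!\Bigl(-\lambda^{2d}\bigl(1-2uz^{-1}+z^{-2}\bigr)^{d}\,t^{\beta}\Bigr) \;=\; \sum_{r=0}^{\infty}\frac{(-\lambda^{2d})^{r}\,t^{r\beta}}{\Gamma(1+r\beta)}\bigl(1-2uz^{-1}+z^{-2}\bigr)^{rd}.
\]

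The final and most delicate step is to invert the $z$-transform, i.e.\ to read off the coefficient of $z^{-k}$. Writing $(1-2uz^{-1}+z^{-2})^{rd} = \bigl(1-z^{-1}(2u-z^{-1})\bigr)^{rd}$ and applying the generalisation of identity \eqref{Binomial-Identity} with $2u$ in place of $2$ and $rd$ in place of $\alpha$, the coefficient of $z^{-2p}$ in $(1-2uz^{-1}+z^{-2})^{rd}$ equals the Gegenbauer-type sum
\[
\binom{rd}{p}\binom{p}{0} + (2u)^{2}\binom{rd}{p+1}\binom{p+1}{2} + \cdots + (2u)^{2p-2}\binom{rd}{2p-1}\binom{2p-1}{2p-2} + (2u)^{2p}\binom{rd}{2p}\binom{2p}{2p}.
\]
Substituting this back into the series in $r$ and setting $k=2p$ assembles exactly the claimed formula for $Q_{d,\beta}^{u}(k,t)$.

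The main obstacle is the combinatorial bookkeeping in the last step: one must carefully identify which pairs $(j,n)$ in the double expansion of $(1-2uz^{-1}+z^{-2})^{rd}$ contribute to a given power $z^{-2p}$, and check that the resulting sum matches the Gegenbauer identity exactly as it was used in the $\beta=1$ case. By uniqueness of the $z$-transform (the series for $G(z,t)$ converges for $|z|$ large) and uniqueness of the Laplace transform, no further verification is required; the initial condition $Q_{d,\beta}^{u}(k,0)=\delta_{k,0}$ is automatically satisfied because at $t=0$ the Mittag-Leffler factor collapses to $1$ and only the $r=0$ term survives.
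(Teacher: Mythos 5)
Your proposal is correct and follows essentially the same route the paper intends: the paper itself only says ``using a similar approach,'' meaning exactly your combination of the $z$-transform reduction, the Laplace-transform/Mittag-Leffler solution as in \eqref{z-transform-TSFPP}, and the coefficient extraction via the Gegenbauer-type generalisation of \eqref{Binomial-Identity} with $2u$ in place of $2$, as was done for the $\beta=1$ case \eqref{Gegenbauer-PMF}. Note only that your (correct) computation produces the factor $\Gamma(1+r\beta)$ and a final bracket term $(2u)^{2p}\binom{rd}{2p}\binom{2p}{2p}$, so the $\Gamma(1+2p\beta)$ and $2^{2p}\binom{\alpha}{2p}$ appearing in the stated proposition should be read as typographical slips rather than as discrepancies in your argument.
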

Note that if $u\neq d$ there is no stochastic process which gives the equations \eqref{Gegenbauer-DDE}. To see this we consider the $z$-transform of $P^u_d(k,t)$
\begin{align}
  G(z,t)=E[z^{- N_d^u(t)}] = \sum_{r=0}^{\infty}{P_{d}^u(k,t)}z^{-r}.
\end{align}
By normalization axiom of probability, it is necessary that $G(1) =1.$
From equation \eqref{PGF}
$$
G(z,t) = e^{{-\lambda}^{2d}\left(1-{\frac{2u}{z}}+\frac{1}{z^{2}}\right)^{d} t}.
$$
With the condition of normalization
$$
G(1,t)=e^{{-2}{\lambda}^{2d}{(1-u)^{d}t}},\;  |u|\leq 1.
$$
It is easy to see that $G(1) < 1$, for all $u$ except the case when $u=1$. So the distribution $P_{d}^u(k,t)$ will not satisfy the normalization condition. One can say that $N_d^u(t)$ is a defective random variable that mean there is some positive mass concentrated at $\infty.$\\

\noindent One can also consider shift operators of the form,

\begin{equation}
\frac{d}{dt} P^*(k,t) = -\lambda[(1-B)^{\alpha_1} + (1-B)^{\alpha_2}]P^*(k,t),
\end{equation}
with initial condition $P^*(k,0) = \delta_{k,0}$. It is easy to show that
\begin{equation}
P^*(k,t) = (-1)^k\sum_{r=0}^{\infty}(-1)^r\frac{\lambda^r}{r!}\sum_{m=0}^{r}{r\choose m} {\alpha_1m+ \alpha_2(r-m) \choose k},\;k=0,1,\ldots
\end{equation}
Similar to the Gagenbauer shift operator case, the function $P^*$ may not be a probability distribution.

\section{Conclusion}
In this article, we introduce and study tempered time-space-fractional Poisson processes, which may provide more flexibility in modeling of real life data. Further, we argue that $z$-transform is more useful than the probability generating function in solving the difference-differential equations since it is more general and hence may be used in the situations where the solution is not a probability distribution indeed. To support this, we work with the Gegenbauer type fractional shift operator. Our results generalize and complements the results available on time- and space-fractional Poisson processes.\\

\noindent {\bf Acknowledgments:}   
N. Leonenko was supported in particular by Australian Research Council's Discovery Projects funding scheme (project DP160101366)and  by project MTM2015-71839-P of MINECO, Spain (co-funded with FEDER funds).

\vspace{2cm}
\noindent {\bf \Large References}
\noindent
\begin{namelist}{xxx}
\item{} Alrawashdeh, M.S., Kelly, J.F., Meerschaert, M.M., Scheffler, H.-P.: Applications of inverse tempered stable subordinators. Comput. Math. Appl. 73, 89--905 (2016).

\item{} Aletti, G., Leonenko, N.,  Merzbach, E.: Fractional Poisson fields and martingales, Journal of Statistical Physics, 170, 700--730 (2018).

\item{}  Bingham, N.H.:  Limit theorems for occupation times of Markov processes, Z. Wahrscheinlichkeitstheorie verw. Geb. 17, 1 22 (1971).

\item {} Beghin, L., Orsingher, E.: Fractional Poisson processes and related random motions. Electron. J. Probab., 14. 1790--1826 (2009).

\item{} Beran, J.: Statistics for Long-Memory Processes, Chapman \& Hall, New York 1994.

\item{} Bertoin, J.: L\'evy Processes, Cambridge University Press, Cambridge, 1996.

\item{}    Buchak, K. V., Sakhno, L. M.: On the governing equations for Poisson and Skellam processes time-changed by inverse subordinators, Theory of Probability and Mathematical Statistics, 98, 87--99 (2018a).

\item{}  Buchak, K. V., Sakhno, L. M.: Properties of Poisson processes directed by compound Poisson-Gamma subordinators, Modern Stochastics: Theory and Applications, 5, 167--189 (2018b).

\item{} Espejo, R.M., Leonenko, N., Ruiz-Medina, M.D.: Gegenbauer random fields. Random Oper. Stoch. Equ. 22, 1--16 (2014).
%

\item{} Gray, H.L., Zhang, N.F., Woodward, W.A.: On generalized fractional processes, J. Time Series Analysis, 10, 233--258 (1989).

\item{} Gorenflo, R., Kilbas, A.A., Mainardi, F., Rogosin, S.V.: Mittag -Leffler Functions, Related Topics and Applications, Berlin  2014.

\item{}  Hahn, M.G., Kobayashi, K., Umarov,  S.: Fokker-Planck-Kolmogorov equations associated with time-changed fractional Brownian motion. Proc. Amer. Math. Soc., 139, 691--705 (2011).

\item{} Hosking, J.R.M.: Fractional Differencing. Biometrika 68, 165--176 (1981).

\item{} Kilbas, A.A., Saigo, M. and Saxena, R. K.: Generalized Mittag-Leffler function and generalized fractional calculus operators, Integral Transforms and Special Functions, vol. 15, no. 1, pp. 31--49 (2004).

\item{} Kumar, A., Gajda, J., Wylomanska, A. Poloczanski, R.: Fractional Brownian Motion Delayed by Tempered and Inverse Tempered Stable Subordinators, Method. Comp. Appl. Probab. (Forthcoming) 2018.

\item{} Kumar, A., Vellaisamy, P.: Inverse tempered stable subordinators, Statistics \& Probability Letters, 103, 134--141 (2015).

\item {} Laskin, N.: Fractional Poisson process. Commun. Nonlinear Sci. Numer. Simul. 8, 201--213 (2003).

\item{} Laskin, N.: Some applications of the fractional Poisson probability distribution
J. Math. Phys., 50, 113513 (2009).

\item{}  Mainardi, F.: Fractals and Fractional Calculus in Continuum Mechanics. Springer Verlag 1997.

\item{} Mainardi, F.,  Gorenflo, R., Scalas, E.: A fractional generalization of the Poisson processes, Vietnam Journal of Mathematics, Vol. 32, pp. 53--64 (2004).

\item{} Meerschaert, M.M., Nane, E., Vellaisamy, P.: The fractional Poisson process and the inverse stable subordinator. Electron. J. Probab., 16, 1600--1620 (2011).

\item{}  Meerschaert, M.M., Scheffler, H.: Triangular array limits for continuous time random walks, Stochastic Process. Appl. 118, pp. 1606--1633 (2008).

\item{} Meerschaert, M.M., Sikorski, A.: Stochastic Models for Fractional Calculus, De Gruyter studies in Mathematics, vol-43 (2012). 

\item{} Meerschaert, M.M., Straka, P.: Inverse stable subordinators. Math Model Nat Phenom 8, 1--16 (2013).

\item{} Orsingher, E., Polito, F.: The space-fractional Poisson process, Statistics \& Probability Letters,82, 852--858 (2012).

\item{} Prabhakar, T.R.: A singular integral equation with a generalized Mittag-Leffler function in the kernel. Yokohama Math J, 19, 7--15 (1971).

\item{}  Uchaikin, V.V. and Sibatov, R.T.: 
A fractional Poisson process in a model of dispersive charge transport in semiconductors,
Russian J. Numer. Anal. Math. Modelling, 23, 283--297 (2008).

\end{namelist}                       
\end{document}